\newtheorem{thm}{Theorem}
\newtheorem{pro}{Proposition}
\newtheorem{rem}{Remark}
\numberwithin{equation}{section} \numberwithin{lem}{section}
\numberwithin{thm}{section} \numberwithin{cor}{section}
\numberwithin{pro}{section} \numberwithin{rem}{section}
\def\ep{\varepsilon}
\begin{document}
\title{Phase transition in a Rabi coupled two-component Bose-Einstein condensate}
\author{Amandine Aftalion}
\address{Ecole des Hautes Etudes en Sciences Sociales, PSL Research University, CNRS UMR 8557, Centre d'Analyse et de Math\'ematique Sociales, 54 Boulevard Raspail, 75006 Paris, France.}
\email{amandine.aftalion@ehess.fr}

\author{Christos Sourdis}
\address{Institute of Applied and Computational Mathematics, Foundation of Research and Technology of Hellas, Herakleion,
Crete, Greece.}
\email{sourdis@uoc.gr}

\date{\today}
\begin{abstract}
This paper deals with the study of the phase transition of the wave functions of a segregated two-component
 Bose-Einstein condensate
   under Rabi coupling. This
  yields a system of two coupled ODE's where the Rabi coupling is linear in the other wave function
   and acts against segregation. We prove estimates
  on the asymptotic behaviour of the wave functions, as the strength of the interaction gets strong or weak. We also derive limiting problems in both cases.

\end{abstract}
 \maketitle


\section{Introduction}\label{secIntro}
\subsection{The problem}

Recently, there has been a huge interest, from the experimental \cite{hal98,matexp}, numerical \cite{am,nitta,dror,sp,qustring,usui}  and mathematical \cite{AL,alamaARMA15,berestycki-wei2012,berestycki2,goldman2015phase} point of view into two component Bose-Einstein condensates. Indeed, the experimental realization of such systems provide opportunities  to explore the rich physics encompassed in it. Two component condensates can interact on the one hand through intercomponent coupling on the modulus, but also through spin orbit coupling. In this paper, we are interested  in a one body coherent Rabi coupling, which provides similar interactions to Josephson coupling in superconductors. This leads to the energy minimization depending on the wave functions $\psi_1$ and $\psi_2$:
\begin{equation}\label{ener2d}
E_\Lambda(\psi_1,\psi_2)=\int_{B}^{}\left\{\sum_{k=1}^{2}\left[\frac{1}{2}|\nabla \psi_k|^2
	+\frac{g_k}{4\ep^2}|\psi_k|^4\right]+\frac{\Lambda}{2\ep^2}|\psi_1|^2|\psi_2|^2
-\frac{\omega}{2\ep^2}(\psi_1\psi_2^*+\psi_1^*\psi_2)\right \},
\end{equation} where $\omega$ denotes the Rabi frequency, $g_k$ the intracomponent coupling, $\Lambda$ the intercomponent coupling, $B$ the unit disc, and $\ep$ is related to the inverse of the number of particules, and therefore is small. We refer to \cite{am,dror} for an introduction to the model and physics references.

The simulations of \cite{am} lead  to phase transitions and vortex sheets that we want to analyze here. We will focus on the 1D phase transition corresponding to the minimization of (\ref{ener2d}) on a 1D interval, close to the interface in the case $g_k=1$. It corresponds to a rescaling in a boundary layer of size $\ep$.

The aim of this paper is therefore to study positive solutions of the system
\begin{equation}\label{eqEq}
\left\{	\begin{array}{rcl}
	 {u}''&=&u^3-u+\Lambda v^2 u-\omega v, \\
&&	\\
	  {v}''&=&v^3-v+\Lambda u^2 v-\omega u,
	\end{array}\right.
	\end{equation}
satisfying
\begin{equation}\label{eqConnect}
(u,v)\to (\bar{u},\bar{v})\ \textrm{as}\ x\to -\infty,\ \   (u,v)\to (\bar{v},\bar{u}) \ \textrm{as}\ x\to +\infty,
\end{equation}
 where $\bar{u},\bar{v}$ are positive numbers to be determined later. The range of values of the positive parameters $\Lambda,\ \omega$ will be discussed in the sequel. This is a heteroclinic connection problem.  The segregation case corresponds to
	  \begin{equation}\label{eqHessian}\Lambda>1\end{equation} and we will study the  limits $\Lambda \to 1$ and
 $\Lambda \to \infty$. Let us point out that in the case $\omega =0$, the solution goes to $(0,1)$ and $(1,0)$ at $\pm \infty$  and this problem has been analyzed in \cite{aftalionSourdis,farina2017monotonicity,sourdisweak}. In \cite{farina2017monotonicity}, it is proved by the moving plane method that the solution is unique, $u'>0$ and $v'<0$. The asymptotic behaviour for large $\Lambda$ has been studied in \cite{aftalionSourdis}: the solution approaches the hyperbolic tangent in the half space while the inner solution is given by a simpler system analyzed in \cite{berestycki-wei2012,berestycki2}. On the other hand, when $\Lambda$ gets to one,  the geometric singular perturbation theory leads to the analysis of the problem on a limiting manifold after a change of function $R=\sqrt{u^2+v^2}$ and $u=R \cos\varphi/2$, $v=R \sin \varphi/2$ and asymptotic results are proved in \cite{sourdisweak}.

 In the case where $\omega$ is not zero, the situation is very different because the limits at infinity are not $(0,1)$ and $(1,0)$ but positive values $(\bar{u},\bar{v})$ and $(\bar{v},\bar{u})$ which are solutions of
 \begin{equation}\label{eql1l2}
	\left\{
	\begin{array}{c}
u(u^2+\Lambda v^2-1)-\omega v=0,\\
v(v^2+\Lambda u^2-1)-\omega u=0.
\end{array}
	\right.
	\end{equation} This comes from the fact that the Rabi coupling mollifies segregation. This  system yields \begin{equation}\label{uv} uv =\frac {\omega}{\Lambda -1},\ u^2+v^2=1.\end{equation} This has a solution if and only if \begin{equation}\label{eqIneq1}
\frac{\omega}{\Lambda-1}<\frac{1}{2},
\end{equation} and the solutions are 
 $(\bar{u},\bar{v})$
  and $(\bar{v},\bar{u})$,
where
\begin{equation}\label{equvEq}
\bar{u}=\sqrt{\frac{1-\sqrt{1-\frac{4\omega^2}{(\Lambda-1)^2}}}{2}}\ \ \textrm{and}\ \
\bar{v}=\sqrt{\frac{1+\sqrt{1-\frac{4\omega^2}{(\Lambda-1)^2}}}{2}}.
\end{equation}
 We will provide more details on this in Section \ref{secEquil}.
  When $\Lambda>1$ and (\ref{eqIneq1}) holds,  segregation is not complete as the components coexist in some parts of the domain, but yet there is an interface, which is at the center of this paper.
 We will prove existence of solutions in this regime and study their asymptotic behaviour.

\subsection{Main results} We first focus on the strong segregation case when $\Lambda$ is large. Because of
 (\ref{uv}), this has a non trivial behaviour in $u$ and $v$ if $\omega /(\Lambda -1)$ is away from zero. We therefore assume that $\omega /(\Lambda -1)\sim c_0$ which is not zero and (\ref{eqIneq1}) holds.
 The main result of the paper in the case of large $\Lambda$ is the following:
\begin{thm}\label{thmMain}
Let $\Lambda>1$ be sufficiently large and \begin{equation}\label{eqomega2}
\frac{\omega}{\Lambda-1}=c_0+\tilde{\omega}\left(\frac{1}{\sqrt{\Lambda-1}}\right)
\end{equation}for some fixed \begin{equation}\label{eqc0}c_0\in\left(0,\frac{1}{2}\right),\end{equation}
where $\tilde{\omega}$ is a  $C^2[0,\infty)$  function, independent of $\Lambda$, such that \begin{equation}\label{eqomega00}\tilde{\omega}(0)=0.\end{equation}Then, there exists a solution
                                                        $(u_\Lambda,v_\Lambda)$ of (\ref{eqEq})-(\ref{eqConnect}), where $\bar{u},\bar{v}$ are as in (\ref{equvEq}), such that
                                                        \begin{equation}\label{eqThm1}
                                                          u_\Lambda(-x)\equiv v_\Lambda(x),
                                                        \end{equation}\begin{equation}\label{eqThm2}
                                                                        u_\Lambda'>0,
                                                                      \end{equation}
                                                                      and
\begin{equation}\label{eqThm3}
  u_\Lambda(x)-u_0(x)=O\left(\frac{1}{\sqrt{\Lambda}}\right),
\end{equation}
\begin{equation}\label{eqThm3+}
  \left|u_\Lambda(x)-\bar{u}\right|\cdot\left|u_\Lambda(x)-\bar{v}\right|+\left|u_\Lambda'(x) \right|=O(1)e^{-\left(\sqrt{2}\sqrt{1-4c_0^2}+O\left(\frac{1}{\sqrt{\Lambda}}\right)\right)|x|},
\end{equation}
uniformly in $\mathbb{R}$, as $\Lambda\to \infty$, where
$u_0$ is the unique solution of
\begin{equation}\label{eqLimitPb1}
 u'=  \frac{u^2-u^4- c_0^2 }{\sqrt{2}\sqrt{u^4+c_0^2}},\ \ u(0)=\sqrt{c_0}.
\end{equation}
Furthermore, we have
\begin{equation}\label{eqThm4}
  u_\Lambda(x)v_\Lambda(x)-\frac{\omega}{\Lambda-1}=O\left(\frac{1}{\Lambda}\right)e^{-\left(\sqrt{2}\sqrt{1-4c_0^2}+O\left(\frac{1}{\sqrt{\Lambda}}\right)\right)|x|},
\end{equation}
uniformly in $\mathbb{R}$, as $\Lambda\to \infty$.

\end{thm}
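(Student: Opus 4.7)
The plan is to construct $(u_\Lambda,v_\Lambda)$ as a symmetric minimiser of the natural action, and to derive the quantitative estimates (\ref{eqThm3})--(\ref{eqThm4}) from an exact first integral of (\ref{eqEq}) together with the spectral analysis of the Hessian of the potential at the wells. The system (\ref{eqEq}) is gradient with potential
\[
W(u,v)=\frac{u^4+v^4}{4}-\frac{u^2+v^2}{2}+\frac{\Lambda}{2}u^2v^2-\omega\,uv,
\]
and I would minimise the renormalised action $E(u,v)=\int_{\R}[\tfrac12(u'^2+v'^2)+W(u,v)-W(\bar u,\bar v)]\,dx$ over the symmetric class $\mathcal A=\{(u,v)\in H^1_{\mathrm{loc}}(\R)^2 : u(-x)=v(x),\ (u,v)\to (\bar u,\bar v)\ \text{as}\ x\to-\infty\}$. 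The symmetry constraint automatically exchanges the two wells, and a standard direct method (coercivity in $H^1$ together with the $u^4+v^4$ growth of $W$ at infinity) produces a smooth minimiser satisfying (\ref{eqEq})--(\ref{eqConnect}) and (\ref{eqThm1}).

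Multiplying the two equations in (\ref{eqEq}) by $u_\Lambda'$ and $v_\Lambda'$, adding and integrating once yields the pointwise identity
\[
\tfrac12\bigl(u_\Lambda'{}^2+v_\Lambda'{}^2\bigr)\;=\;\frac{(u_\Lambda^2+v_\Lambda^2-1)^2}{4}\;+\;\frac{\Lambda-1}{2}\Bigl(u_\Lambda v_\Lambda-\tfrac{\omega}{\Lambda-1}\Bigr)^2,
\]
the constant of integration being zero thanks to (\ref{eqConnect}) and the fact that $(\bar u,\bar v)$ solves (\ref{eql1l2}). This identity is the cornerstone of the analysis: a comparison of $E(u_\Lambda,v_\Lambda)$ with the energy of the explicit test pair $(u_0^{\Lambda},\omega/[(\Lambda-1)u_0^{\Lambda}])$, where $u_0^{\Lambda}$ solves (\ref{eqLimitPb1}) with $c_0$ replaced by $\omega/(\Lambda-1)$, shows that $u_\Lambda'$ and $v_\Lambda'$ are uniformly bounded, whence the right-hand side above is uniformly bounded, which forces $|u_\Lambda v_\Lambda-\omega/(\Lambda-1)|=O(1/\sqrt{\Lambda})$ \emph{pointwise}, giving the prefactor in (\ref{eqThm4}). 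Substituting $v_\Lambda=\omega/[(\Lambda-1)u_\Lambda]+O(1/\sqrt\Lambda)$ into the identity and using the symmetry to fix $u_\Lambda(0)=v_\Lambda(0)=\sqrt{\omega/(\Lambda-1)}$ reduces the problem to a scalar first-order ODE for $u_\Lambda$ whose leading order is precisely (\ref{eqLimitPb1}); a Gronwall comparison against the unique solution $u_0$ yields (\ref{eqThm3}).

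A direct computation shows that the Hessian of $W$ at either well has trace $\Lambda+1$ and determinant $2(\Lambda-1)(1-4c_0^2)+O(\sqrt{\Lambda})$, hence eigenvalues $\Lambda+O(1)$ (the fast mode, transverse to the constraint curve $\{uv=c_0\}$) and $2(1-4c_0^2)+O(1/\sqrt{\Lambda})$ (the slow mode along the constraint). The square root of the slow eigenvalue is the exponent in (\ref{eqThm3+}) and (\ref{eqThm4}). I would build explicit exponential supersolutions adapted to the slow eigenvector for the $2\times2$ cooperative linear system associated to $\mathrm{Hess}\,W(\bar v,\bar u)$, and use a maximum-principle comparison on $[0,\infty)$ to dominate the nonlinear remainder, yielding the exponential decay of $(u_\Lambda-\bar v,v_\Lambda-\bar u)$. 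The product $|u_\Lambda-\bar u|\cdot|u_\Lambda-\bar v|$ in (\ref{eqThm3+}) is then interpreted as $(\bar v-u_\Lambda)(u_\Lambda-\bar u)$ on the range $u_\Lambda\in[\bar u,\bar v]$, and the decay of $u_\Lambda'$ follows directly from the first integral. Strict monotonicity (\ref{eqThm2}) is obtained either by a moving-plane argument in the spirit of \cite{farina2017monotonicity}, or, once (\ref{eqThm3}) is established, by combining the strict monotonicity of $u_0$ with the exclusion of interior critical points of $u_\Lambda$ in $(\bar u,\bar v)$ through the first integral.

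The main obstacle is the construction of the exponential supersolutions capturing the sharp rate $\sqrt{2}\sqrt{1-4c_0^2}+O(1/\sqrt\Lambda)$ uniformly in $\Lambda$: the small spectral gap above zero, combined with the $\Lambda$-dependent fast eigenvalue, makes the barriers delicate to write down in the original $(u,v)$ variables. The clean approach is to diagonalise the Hessian in the slow/fast frame and to work there in a weighted space $e^{\alpha|x|}L^\infty$ with $\alpha<\sqrt{2}\sqrt{1-4c_0^2}$, matching to the transition region $x\approx0$ via the sharp description of the correction $v_\Lambda-\omega/[(\Lambda-1)u_\Lambda]$ in terms of the fast mode of the linearisation.
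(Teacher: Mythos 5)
Your variational route (minimise the renormalised action in the symmetric class, then exploit the first integral) is genuinely different from the paper's geometric singular perturbation argument, but its quantitative core has a gap that is circular as written. Minimality against the test pair gives only \emph{integral} bounds, namely $\int_{\R}\bigl(u_\Lambda'^2+v_\Lambda'^2\bigr)dx\le C$ and $\int_{\R}(\Lambda-1)\bigl(u_\Lambda v_\Lambda-\tfrac{\omega}{\Lambda-1}\bigr)^2dx\le C$; it does not give a uniform-in-$\Lambda$ \emph{pointwise} bound on $u_\Lambda',v_\Lambda'$. Worse, by the very identity you invoke, a pointwise bound on $(u_\Lambda',v_\Lambda')$ is \emph{equivalent} to a pointwise bound on $\sqrt{\Lambda-1}\,\bigl|u_\Lambda v_\Lambda-\tfrac{\omega}{\Lambda-1}\bigr|$, so the chain ``energy comparison $\Rightarrow$ derivatives bounded $\Rightarrow$ $|uv-\tfrac{\omega}{\Lambda-1}|=O(1/\sqrt{\Lambda})$ pointwise'' assumes exactly what it must prove. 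The uniform gradient bound in this strongly coupled limit is the substantive estimate (the analogue of the uniform Lipschitz bounds in segregation problems); it would have to come from an independent argument, e.g.\ the exact equation $h''-\bigl[(\Lambda+1)(u^2+v^2)-2\bigr]h=2\tfrac{\omega}{\Lambda-1}(u^2+v^2-1)+2u'v'$ for $h=uv-\tfrac{\omega}{\Lambda-1}$, a maximum principle and a bootstrap, none of which appears in the proposal. In addition, even granting your claim, $O(1/\sqrt{\Lambda})$ is \emph{not} ``the prefactor in (\ref{eqThm4})'': the theorem asserts $O(1/\Lambda)$, a full extra order of smallness, which in the paper comes from the slow-manifold structure $h=\varepsilon^2p$ with $p=O(1)$, $\varepsilon=1/\sqrt{\Lambda-1}$; your outline contains no mechanism to gain it.

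There are secondary gaps as well. The reduction of the first integral to the scalar ODE (\ref{eqLimitPb1}) requires the sign of $u_\Lambda'$ (to select the branch of the square root) and a uniform lower bound $u_\Lambda\ge\delta>0$ together with $u_\Lambda^2+v_\Lambda^2$ close to $1$, yet you defer monotonicity until after (\ref{eqThm3}), creating a circularity, and you never establish the lower bound. The existence step in the symmetric class needs more than ``coercivity plus quartic growth'': one must prevent the transition layer of a minimising sequence from drifting to infinity and exclude excursions near the other global minima $(-\bar u,-\bar v)$, $(-\bar v,-\bar u)$ of $W$, and obtain positivity. Finally, the sharp $\Lambda$-uniform decay rate $\sqrt{2}\sqrt{1-4c_0^2}+O(1/\sqrt{\Lambda})$ in (\ref{eqThm3+})--(\ref{eqThm4}) is precisely the part you flag as ``delicate'' and leave unexecuted, whereas in the paper it falls out of the linearisation of the $\varepsilon$-reduced flow on the invariant manifold $\mathcal{M}_\varepsilon$, with the heteroclinic itself produced by Fenichel reduction, conservation of (\ref{eqHamil}) and the implicit function theorem, and the symmetry (\ref{eqThm1}) by local uniqueness. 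Your Hessian computation (trace $\Lambda+1$, determinant $2(\Lambda-1)(1-4c_0^2)$ at the wells) is correct and identifies the right slow rate, and the automatic symmetry from the constrained minimisation is an attractive feature, but as it stands the proposal does not prove the quantitative statements (\ref{eqThm3}), (\ref{eqThm3+}) and (\ref{eqThm4}).
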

\begin{rem}
  The simplest case where $\omega=c_0\Lambda$ can be put in the form (\ref{eqomega2}) by choosing $\tilde{\omega}(\epsilon)=c_0 \epsilon^2$.
\end{rem}
We note that solutions of (\ref{eqEq}) are such that the  Hamiltonian
	\begin{equation}\label{eqHamil}
  H(u,v)=\frac{(u')^2}{2}+\frac{(v')^2}{2}-\frac{(1-u^2-v^2)^2}{4}-\frac{\Lambda-1}{2}\left(uv-\frac{\omega}{\Lambda-1} \right)^2
\end{equation} is constant. This constant is equal
 to zero along solutions that satisfy (\ref{eqConnect}).
In the limit when $\omega/(\Lambda -1)$ is $c_0$, then we are going to prove that $u_\Lambda v_\Lambda$ is asymptotically equal to $\omega/(\Lambda -1)$ so that the last term in (\ref{eqHamil}) becomes negligible. In order to find the limiting function $u_0$, we can therefore  formally replace $v$ by $c_0/u$ in (\ref{eqHamil}) and find that it is given by (\ref{eqLimitPb1}) which will be detailed in Proposition \ref{proU0} below.

Now we move to the other extreme case of weak segregation when $\Lambda$ tends to 1:
\begin{thm}\label{thm2}
  Let $\Lambda>1$ be sufficiently close to $1$ and
  \begin{equation}\label{eqomegaDef2}
  \frac{\omega}{\Lambda-1}=c_0+\tilde{\omega}\left(\sqrt{\Lambda-1}\right)
  \end{equation}where as before $c_0$ satisfies (\ref{eqc0})
 and $\tilde{\omega}$ is as in Theorem \ref{thmMain}. Then, there exists a solution $(u_\Lambda,v_\Lambda)$ of (\ref{eqEq})-(\ref{eqConnect}), where
$\bar{u},\bar{v}$ are as in (\ref{equvEq}), such that (\ref{eqThm1}) holds and
\[
u_\Lambda^2(x)+v_\Lambda^2(x)=1+O(\Lambda-1)e^{-\left(\sqrt{1-4c_0^2}\sqrt{\Lambda-1}+O(\Lambda-1)\right)|x|},
\]
uniformly in $\mathbb{R}$, as $\Lambda\to 1^+$. Furthermore, the angle \[\frac{\varphi_\Lambda}{2}=\tan^{-1}\left(\frac{v_\Lambda}{u_\Lambda}\right)\] satisfies
\[
\varphi_\Lambda(x)-\varphi_0\left(x\sqrt{\Lambda-1} \right)=O\left(\sqrt{\Lambda-1} \right),
\]uniformly in $\mathbb{R}$, as $\Lambda\to 1^+$, where $\varphi_0$ is the heteroclinic solution of\begin{equation}\label{eqLimitPb2} \varphi_0'=2c_0-\sin\varphi_0,\ \ x\in \mathbb{R},\end{equation}
such that $\varphi_0'<0$, $\varphi_0(0)=\frac{\pi}{2}$ and $\varphi_0(-x)+\varphi_0(x)\equiv \pi$. Moreover, \[\varphi_\Lambda'<0,\ \ x\in \mathbb{R},\]
and \[\left|u_\Lambda(x)-\bar{u}\right|+\left|v_\Lambda(x)-\bar{v}\right|=O\left(e^{\left(\sqrt{1-4c_0^2}\sqrt{\Lambda-1}+O(\Lambda-1)\right)x}\right),\] uniformly in $x<0$, as $\Lambda\to 1^+$.
\end{thm}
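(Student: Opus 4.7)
The plan is to introduce polar-type coordinates $u=R\cos(\varphi/2)$, $v=R\sin(\varphi/2)$ and to treat (\ref{eqEq}) as a slow-fast system with small parameter $\delta=\sqrt{\Lambda-1}$ and slow variable $y=\delta x$; this is the framework of \cite{sourdisweak}, adapted to the shifted equilibria $(\bar{u},\bar{v})$, $(\bar{v},\bar{u})$. A short computation using $RR'=uu'+vv'$ and $R^2\varphi'/2=uv'-vu'$ turns (\ref{eqEq}) into
\begin{align*}
R''&=\frac{R(\varphi')^2}{4}+R(R^2-1)+(\Lambda-1)R^3\frac{\sin^2\varphi}{2}-\omega R\sin\varphi,\\
(R^2\varphi')'&=(\Lambda-1)R^2\cos\varphi\Big(R^2\sin\varphi-2(c_0+\tilde{\omega}(\delta))\Big),
\end{align*}
and the Hamiltonian identity $H=0$ from (\ref{eqHamil}) reads
\[
(R')^2+\frac{R^2(\varphi')^2}{4}=\frac{(1-R^2)^2}{2}+\frac{\Lambda-1}{4}\Big(R^2\sin\varphi-2(c_0+\tilde{\omega}(\delta))\Big)^2.
\]
In these variables the asymptotic states correspond to $R=1$ with $\sin\varphi=2(c_0+\tilde{\omega}(\delta))\in(0,1)$, and the required symmetry $u_\Lambda(-x)\equiv v_\Lambda(x)$ becomes $R(-x)\equiv R(x)$, $\varphi(-x)+\varphi(x)\equiv\pi$, forcing $\varphi(0)=\pi/2$ and $R'(0)=0$.

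Rewritten in $y$, the first equation reads $R(R^2-1)=\delta^2\cdot[\text{bounded in }R,R_y,\varphi,\varphi_y]$, so the manifold $\{R=1\}$ is a normally hyperbolic slow manifold and Fenichel's persistence theorem yields an invariant perturbation $\mathcal{M}_\delta$ on which $R=1+O(\delta^2)$ and $R_y=O(\delta)$. Inserting these into the Hamiltonian identity and taking the branch with $\varphi'<0$ gives the first-order reduced equation
\[
\varphi_y=-(\sin\varphi-2c_0)+O(\delta),
\]
whose $\delta=0$ limit is exactly (\ref{eqLimitPb2}). Direct integration of this autonomous ODE produces the heteroclinic $\varphi_0$ with $\varphi_0(0)=\pi/2$ and $\varphi_0(-y)+\varphi_0(y)\equiv\pi$; its endpoints $\arcsin(2c_0)$ and $\pi-\arcsin(2c_0)$ are hyperbolic with linearized rate $\sqrt{1-4c_0^2}$, which pulls back to the stated $x$-rate $\sqrt{1-4c_0^2}\sqrt{\Lambda-1}$.

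To upgrade this to a solution of the full system I would run a symmetric shooting argument. Imposing $\varphi(0)=\pi/2$, $R'(0)=0$ automatically propagates the symmetry, leaving one free datum after the constraint $H=0$; this is tuned so that the forward orbit enters the two-dimensional stable manifold of $(\bar{v},\bar{u})$ at $+\infty$. A direct linearization of (\ref{eqEq}) at $(\bar{v},\bar{u})$ shows (for small $\delta$) that the two characteristic values $\mu_\pm$ of the coupling matrix satisfy $\mu_+\to 2$ and $\mu_-=(\Lambda-1)(1-4c_0^2)+O((\Lambda-1)^{3/2})$, so the stable manifold has one slow direction with rate $\sqrt{1-4c_0^2}\sqrt{\Lambda-1}+O(\Lambda-1)$ (the angle mode, inherited from the reduced flow) and one fast direction of order one (relaxation of the modulus). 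The implicit function theorem, set up in an exponentially weighted $C^1$ space anchored on $\varphi_0(\delta x)$, then produces a solution with $\varphi_\Lambda(x)-\varphi_0(\delta x)=O(\delta)$ uniformly in $\mathbb{R}$. The estimates on $u_\Lambda^2+v_\Lambda^2-1$ and on $|u_\Lambda-\bar{u}|+|v_\Lambda-\bar{v}|$ follow from the Hamiltonian relation together with the exponential convergence of $\varphi_\Lambda$ to its limits, while $\varphi_\Lambda'<0$ is inherited from the strict negativity of $\varphi_0'$ via uniform closeness.

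The main obstacle I anticipate is that Fenichel's theorem is inherently local in $y$, whereas the heteroclinic lives on all of $\mathbb{R}$. One must therefore patch the slow-manifold reduction on a large but bounded $y$-window with a direct stable/unstable manifold analysis at the two endpoints, making sure that the exponential rates predicted by the reduced flow agree with those of the full linearization at $(\bar{u},\bar{v})$ and $(\bar{v},\bar{u})$ uniformly in $\delta$. This is precisely the architecture developed for $\omega=0$ in \cite{sourdisweak}; the Rabi term only shifts the equilibria away from $\varphi\in\{0,\pi\}$ to $\varphi\in\{\arcsin(2c_0),\pi-\arcsin(2c_0)\}$ and adds the constant $2c_0$ in the reduced flow, but does not alter the slow-fast structure, so that scheme transfers with mostly notational changes.
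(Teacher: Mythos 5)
Your setup coincides with the paper's: polar coordinates, $\varepsilon=\sqrt{\Lambda-1}$, slow variable $y=\varepsilon x$, Fenichel reduction to a slow manifold on which $R=1+O(\varepsilon^2)$, the conserved Hamiltonian (\ref{eqHamil}) in polar form, and the reduced flow $\varphi'=2c_0-\sin\varphi$ with the reversibility $\varphi(x)+\varphi(-x)\equiv\pi$; your formulas and the eigenvalue asymptotics at the equilibria are correct. The genuine gap is the persistence step, i.e.\ the proof that a true heteroclinic exists for $\varepsilon>0$. As the paper stresses, $W^u(0,0,\pi-\bar\varphi,0)$ and $W^s(0,0,\bar\varphi,0)$ are two-dimensional manifolds lying inside the three-dimensional level set $\{\hat H=0\}$, so their intersection can never be transverse in $\mathbb{R}^4$, and on the two-dimensional slow manifold the corresponding one-dimensional curves cannot intersect transversally either; some extra structure must be invoked to force them to meet. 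The paper supplies this via the Hamiltonian: both invariant manifolds satisfy $\hat H=0$ (equation (\ref{eqH=01})), and on the section $\{\varphi_1=\pi/2\}$ of $\mathcal{M}_\varepsilon$ the three scalar equations consisting of the graph relations (\ref{eqh1}) and $\hat H=0$ determine $(w_1,w_2,\varphi_2)$ uniquely by the implicit function theorem, the nondegeneracy being exactly $\varphi_{2,0}(0)\neq 0$ from (\ref{eqPhi102}); hence the two curves pass through the same point and, being one-dimensional invariant curves of the $\varepsilon$-reduced flow, coincide. In your proposal this step is replaced by the assertion that the remaining shooting parameter ``is tuned so that the forward orbit enters the stable manifold'' and by an implicit function theorem ``in an exponentially weighted $C^1$ space anchored on $\varphi_0(\delta x)$'', but neither is substantiated: you give no transversality or sign-change argument showing the tuning is possible, and for the functional-analytic IFT you do not address the crux of any singularly perturbed Lyapunov--Schmidt scheme, namely uniform (in $\varepsilon$) invertibility of the linearization about the $\varepsilon$-dependent approximate solution, with its two widely separated decay rates $O(1)$ and $O(\varepsilon)$ in $x$ and the near-kernel coming from translation of $\varphi_0$.

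Two further remarks. Your symmetric-shooting idea (impose $\varphi(0)=\pi/2$, $R'(0)=0$, which indeed propagates $u(-x)\equiv v(x)$) could in principle be turned into an alternative to the paper's Hamiltonian argument, via reversibility: if the slow manifold $\mathcal{M}_\varepsilon$ can be chosen invariant under the reversor $(R,R',\varphi,\varphi',x)\mapsto(R,-R',\pi-\varphi,\varphi',-x)$, then the reduced unstable curve crosses the fixed-point set $\{\varphi_1=\pi/2\}$ for small $\varepsilon$ (it does so at $\varepsilon=0$ with $\varphi_{2,0}(0)\neq0$), and the orbit through that crossing is automatically symmetric, hence heteroclinic. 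But this requires justifying the equivariant choice of the Fenichel manifold, which you do not do; as written, the symmetry only reduces the parameter count and the existence question remains open. Finally, your worry about Fenichel being local in $y$ is handled in the paper not by matching on a finite window but by observing that both equilibria themselves lie on $\mathcal{M}_\varepsilon$ (equation (\ref{eqEquiv01})), so once the two one-dimensional reduced invariant curves coincide the connection is global on $\mathcal{M}_\varepsilon$; your proposed patching is unnecessary extra machinery, while the step you leave unproved is precisely the heart of Theorem \ref{thm11}.
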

The proof relies again on the conservation of Hamiltonian but this time written in polar coordinates $(R,\varphi)$ where $R^2=u^2+v^2$ and $u=R\cos\varphi /2$, $v=R\sin\varphi /2$:
 \[
H(R,\varphi)= \frac{(R')^2}{2}+\frac{R^2}{8}(\varphi')^2-\frac{1}{4}(1-R^2)^2-\frac{\Lambda-1}{2}\left(\frac{R^2}{2}\sin\varphi-\frac{\omega}{\Lambda-1} \right)^2.
\] After rescaling and proving, at the limit when $\omega/(\Lambda -1)$ tends to $c_0$, that $R$ tends to 1,  and the last term in $H$ vanishes, we find (\ref{eqLimitPb2}).

\subsection{Method of proof} Our approach for showing Theorems \ref{thmMain} and \ref{thm2} is the same. In each case  we define a suitable small parameter $\varepsilon=\varepsilon(\Lambda)$ and make a convenient change of coordinates in order to write (\ref{eqEq}) as a   singularly perturbed system in slow-fast form with two slow and two fast variables. Loosely speaking, for Theorem \ref{thmMain} we   use a change of coordinates that straightens the hyperbola $\{uv=\frac{\omega}{\Lambda-1}\}$, while for Theorem \ref{thm2} we  employ polar coordinates. Then, in the resulting slow-fast formulation   we can apply standard theorems of geometric singular perturbation theory (see \cite{kuhen} and the references therein). We find that in both cases the dynamics can be reduced on  two-dimensional invariant manifolds $\mathcal{M}_\varepsilon$, that vary smoothly for small $\varepsilon\geq 0$, with  the flow on them being determined by   smooth regular perturbations of the limit problems (\ref{eqLimitPb1}) and (\ref{eqLimitPb2}), respectively. On the limiting manifold $\mathcal{M}_0$, there exists a singular heteroclinic connection between the equilibria corresponding to the $\varepsilon=0$ limits of $(\bar{u},\bar{v})$ and $(\bar{v},\bar{u})$, which are saddles with two-dimensional stable and unstable manifolds. Clearly the intersection of the latter manifolds cannot be transverse in the ambient space $\mathbb{R}^4$ nor on $\mathcal{M}_0$. We establish the persistence of the singular heteroclinic connection on $\mathcal{M}_\varepsilon$ for small $\varepsilon$ by exploiting the conservation of the Hamiltonian (\ref{eqHamil}).

More precise estimates will be detailed in Theorems \ref{thm1} and \ref{thm11} respectively.

\subsection{Open questions}A natural question to ask is whether the solution we have found is unique. It would be very nice to have such a proof using moving plane methods, in particular extending \cite{farina2017monotonicity}. This would require precise bounds on $u^2+v^2-1$ on the one hand and $uv-\omega/(\Lambda -1)$ on the other hand. In the case $\omega=0$, we have a uniqueness proof in \cite{aftalionSourdis} which is based on the continuation method starting from $\Lambda=3$ and using the nondegeneracy of the linearized operator. But this cannot be applied here, since our result only holds for large $\Lambda$ or $\Lambda$ close to 1.

The other limit,  when $\omega/(\Lambda -1)$ is small and $\Lambda$ is large is not treated in this paper. It should display  segregation but in a more regular manner than for $\omega=0$, since
 at leading order we expect $u v\sim \omega/(\Lambda-1) $.

 We have studied the case  when $\omega/(\Lambda -1)$ is less than 1/2. On the other hand,
 when $\omega/(\Lambda -1)$ is bigger than 1/2, we expect coexistence of the components that is the ground state will be given by $u=v$: the Rabi coupling should overcome segregation.

\subsection{Outline of the paper} In Section \ref{secEquil} we will find the equilibria of (\ref{eqEq}). The proof of Theorem \ref{thmMain} will be carried out in Section \ref{secSeparation}, while that of Theorem \ref{thm2} will be carried out in Section \ref{secWeakSepar}.

\section{Equilibria}\label{secEquil}
To find the equilibria of (\ref{eqEq}), we need to solve the following algebraic system:
\begin{equation}\label{eqAlgeb}
		\begin{array}{lcr}
	 u^3-u+\Lambda v^2 u-\omega v&=&0, \\
	  v^3-v+\Lambda u^2 v-\omega u&=&0.
	\end{array}
	\end{equation}

Multiplying the first equation by $u$, the second by $v$, then subtracting and adding the resulting equations leads to the system
\begin{equation}\label{eqlatter}
		\begin{array}{lcr}
(u^2-v^2)(u^2+v^2-1)&=&0,\\ &&	\\
  \left(u^2+v^2-\frac{1}{2}\right)^2 +2 (\Lambda -1) u^2 v^2 -2 \omega uv&=&\frac{1}{4}.\end{array}
\end{equation}
As we are interested in positive solutions of (\ref{eqEq}), we get from the former relation that
\begin{equation}\label{eqAlg1}
u=v\ \ \textrm{or}\ \ u^2+v^2=1.
\end{equation}
The first case in (\ref{eqAlg1}) yields, through (\ref{eqlatter}), the equilibria
\begin{equation}\label{eqEquil1}
(0,0)\ \ \textrm{and}\ \ \left(\sqrt{\frac{1+\omega}{1+\Lambda}},\sqrt{\frac{1+\omega}{1+\Lambda}} \right).
\end{equation}
The second case in (\ref{eqAlg1}) yields either $uv=0$ or $(\bar u, \bar v)$ or $(\bar v,\bar u)$ with (\ref{equvEq}),
 in the case (\ref{eqIneq1}).
We note that if $u$ or $v$ is zero, then because of (\ref{eqlatter}), we have $(1,0)$ and $(0,1)$ as equilibria but these clearly do not satisfy (\ref{eqAlgeb}).
 
 \section{The strong separation limit}\label{secSeparation}
We consider the regime where $\Lambda \to +\infty$ in Theorem \ref{thmMain}.
We  expect  that the product $uv$ of solutions to (\ref{eqEq}), (\ref{eqConnect}) should converge to $c_0$, as $\Lambda \to +\infty$, at least in some weak sense. Therefore, it is natural to define a  new independent variable
\begin{equation}\label{eqhx}
h=uv-\frac{\omega}{\Lambda-1}.
\end{equation}
Then, it follows readily that system (\ref{eqEq}) is equivalent to
\begin{equation}\label{eqhu}\begin{array}{rcl}
    h'' & = & \left[\left(1+\frac{2}{\Lambda-1}\right)(\Lambda-1)h+\frac{2\omega}{\Lambda-1}  \right]\left[u^2+\frac{1}{u^2}\left(h+\frac{\omega}{\Lambda-1} \right)^2 \right] \\
     &  & -2h-\frac{2\omega}{\Lambda-1}+2\frac{u'}{u^2}\left(-\frac{\omega}{\Lambda-1}u'+h'u-hu' \right) \\
      &   &   \\
    u'' & = & u^3-u+\frac{1}{u}\left(h+\frac{\omega}{\Lambda-1} \right)\left(\Lambda h+\frac{\omega}{\Lambda-1} \right) .\end{array}
\end{equation}
Moreover,  conditions (\ref{eqConnect}) become
\begin{equation}\label{eqConnectNew}\begin{array}{c}
                                      (h,u)\to \left(0,\bar{u}\right)\ \textrm{as}\ x\to -\infty, \\ \\
                                      (h,u)\to \left(0,\bar{v}\right) \ \textrm{as}\ x\to +\infty.
                                    \end{array}
\end{equation}
We find that the Hamiltonian
\begin{equation}\label{eqHamilhu}
\begin{split}
  \tilde{H}=&\frac{(u')^2}{2}+\frac{\left(h'u-\left(h+\frac{\omega}{\Lambda-1}\right)u'\right)^2}{2u^4}\\&-\frac{\left(1-u^2-\frac{1}{u^2}\left(h+\frac{\omega}{\Lambda-1}
 \right)^2\right)^2}{4}-\frac{\Lambda-1}{2}h^2
\end{split}\end{equation} derived from (\ref{eqHamil}),
is conserved along solutions of (\ref{eqhu}). In particular,  $\tilde{H}=0$ along solutions that satisfy
(\ref{eqConnectNew}).

\subsection{Slow-fast formulation}
We set
\begin{equation}\label{eq32}
\varepsilon=\frac{1}{\sqrt{\Lambda-1}},
\end{equation}
\begin{equation}\label{eq33}
h=\varepsilon^2 p,\ \ q=\varepsilon p',\ \ z=u'.
\end{equation}
Then, we can write system (\ref{eqhu}) in the following slow-fast form:
\begin{equation}\label{eqSF}\begin{array}{rcl}
\varepsilon p'&=&q,\\
&&\\
    \varepsilon q' & = & \left[\left(1+ 2\varepsilon^2 \right)p+ 2 \left(c_0+\tilde{\omega}(\varepsilon)\right)  \right]\left[u^2+\frac{1}{u^2}\left(\varepsilon^2p+ c_0+\tilde{\omega}(\varepsilon)\right)^2 \right] \\
     &  & -2\varepsilon^2p-2\left(c_0+\tilde{\omega}(\varepsilon)\right)\left(1+\frac{z^2}{u^2}\right) +2\varepsilon \frac{z}{u}q-2\varepsilon^2 \frac{z^2}{u^2}p, \\
      &   &   \\
      u'&=&z,\\
      & &\\
    z' & = & u^3-u+\frac{1}{u}\left(\varepsilon^2p+c_0+\tilde{\omega}(\varepsilon) \right)
    \left[(1+\varepsilon^2)p +  c_0+\tilde{\omega}(\varepsilon) \right]. \end{array}
\end{equation}
This is called the \emph{slow system}.

Moreover, the conditions (\ref{eqConnectNew}) become
\begin{equation}\label{eqConnectNewest}\begin{array}{c}
                                      (p,q,u,z)\to \left(0,0,\bar{u},0\right)\ \textrm{as}\ x\to -\infty, \\ \\
                                      (p,q,u,z)\to \left(0,0,\bar{v},0\right) \ \textrm{as}\ x\to +\infty.
                                    \end{array}
\end{equation}

The eigenvalues of the linearization of (\ref{eqSF}) at the equilibria $\left(0,0,\bar{u},0\right)$ and $\left(0,0,\bar{v},0\right)$ are real and  given by
\begin{equation}\label{eqEV-}
\pm\frac{1}{\varepsilon}+O(1), \ \ \pm  \sqrt{2}\sqrt{1-4c_0^2}+O(\varepsilon)\ \ \textrm{as}\ \ \varepsilon \to 0.
\end{equation} Therefore, each of these equilibria is a saddle with two-dimensional (global) stable and unstable manifolds. In the light of (\ref{eqConnectNewest}), we will be interested in the unstable manifold $W^u(0,0,\bar{u},0)$ of $(0,0,\bar{u},0)$ and the stable manifold $W^s(0,0,\bar{v},0)$ of $(0,0,\bar{v},0)$.

By virtue of (\ref{eqHamil}), we find that the Hamiltonian
\begin{equation}\label{eqHamilpq}
\begin{split}
  \hat{H}=&\frac{z^2}{2}+\frac{\left(\varepsilon q u-\left(\varepsilon^2p+c_0+\tilde{\omega}(\varepsilon)\right)z\right)^2}{2u^4}\\&-\frac{\left(1-u^2-\frac{1}{u^2}\left(\varepsilon^2p+c_0+\tilde{\omega}(\varepsilon)
 \right)^2\right)^2}{4}-\frac{\varepsilon^2}{2}p^2
\end{split}\end{equation}
is conserved along solutions of (\ref{eqSF}). In particular,  $\hat{H}=0$ holds along solutions that satisfy
one of the asymptotic behaviours in (\ref{eqConnectNewest}). In other words, the following holds:
\begin{equation}\label{eqH=0}
\hat{H}=0\ \ \textrm{on}\ \ W^u(0,0,\bar{u},0)\cup W^s(0,0,\bar{v},0).
\end{equation}

\subsection{The slow (critical) manifold $\mathcal{M}_0$ and the reduced system}\label{secSlowRed}
Formally setting $\varepsilon=0$ in (\ref{eqSF}), and keeping in mind (\ref{eqomega00}), gives us the  \emph{slow limit system}:
\begin{equation}\label{eqSFlimit}\begin{array}{rcl}
0&=&q,\\
&&\\
    0 & = & (p+ 2 c_0  )\left(u^2+\frac{c_0^2}{u^2}\right)-2c_0-2c_0\frac{z^2}{u^2}, \\
      &   &   \\
      u'&=&z,\\
      & &\\
    z' & = & u^3-u+\frac{c_0}{u}
    (p +  c_0). \end{array}
\end{equation}

By solving the  first two equations for $p,q$ we can determine the \emph{slow manifold}:
\begin{equation}\label{eqslowManif}
\mathcal{M}_0=\left\{p=2c_0\frac{u^2+z^2}{u^4+c_0^2}-2c_0,\  \ q=0,\ \ (u,z)\in \mathbb{R}^2,\ u\neq 0\right\}.
\end{equation}

Plugging this in the last two equations, gives us the \emph{reduced system}:
\begin{equation}\label{eqReduc}\begin{array}{rcl}
      u'&=&z,\\
      & &\\
    z' & = & u^3-u+\frac{c_0^2}{u}\left(2\frac{u^2+z^2}{u^4+c_0^2}-1 \right). \end{array}
\end{equation}

The above system has the equilibria $(\bar{u}_0,0)$ and $(\bar{v}_0,0)$, where
\begin{equation}\label{equ0v0}
\bar{u}_0=\sqrt{\frac{1-\sqrt{1-4c_0^2}}{2}}\ \ \textrm{and}\ \
\bar{v}_0=\sqrt{\frac{1+\sqrt{1-4c_0^2}}{2}}.
\end{equation}We note that these are the limits of $\bar{u}$ and $\bar{v}$ from (\ref{equvEq}), respectively.
The eigenvalues of the corresponding linearizations  at both of these equilibria are
$\pm  \sqrt{2}\sqrt{1-4c_0^2}$ (keep in mind also (\ref{eqEV-})).
Therefore, each of these equilibria is a saddle with one-dimensional (global) stable and unstable manifolds.
We will be interested in the unstable manifold $W^u(\bar{u}_0,0)$ of $(\bar{u}_0,0)$ and the stable manifold $W^s(\bar{v}_0,0)$ of
$(\bar{v}_0,0)$. The former manifold is tangent to $(1,  \sqrt{2}\sqrt{1-4c_0^2})$ at $(\bar{u}_0,0)$, while the latter is tangent to $(1,-  \sqrt{2}\sqrt{1-4c_0^2})$ at $(\bar{v}_0,0)$.
 \subsection{The singular heteroclinic connection}\label{subsecUo}
By setting  $\omega/(\Lambda-1)$ equal to $c_0$ and $v$ equal to $c_0/u$ in (\ref{eqHamil}) (since $uv=c_0$ on $\mathcal{M}_0$), we can write the limiting hamiltonian
\begin{equation}\label{eqHamiltReduc}
H_0=\left(1+\frac{c_0^2}{u^4} \right)\frac{z^2}{2}-\frac{1}{4}\left(1-u^2-\frac{c_0^2}{u^2}\right)^2.
\end{equation}
It is indeed conserved along solutions of (\ref{eqReduc}) and equal to 0 if $u$ connects the equilibria in (\ref{equ0v0}).

Solutions of (\ref{eqReduc}) with $H_0=0$ satisfy one of the following first order ODEs:\begin{equation}\label{eqRedInt}
u'=\pm \frac{u^2-u^4- c_0^2 }{\sqrt{2}\sqrt{u^4+c_0^2}}.
\end{equation}
 In fact, we have the following simple proposition.

\begin{pro}\label{proU0}
  The unique solution $u_0$ of (\ref{eqRedInt}) with the plus sign and
\begin{equation}\label{eqHalf}
  u_0(0)=\sqrt{c_0},
\end{equation}satisfies \begin{equation}\label{eqRedIntHet}
u_0\to \bar{u}_0\ \textrm{as}\ x\to -\infty,\ \ u_0\to \bar{v}_0\ \textrm{as}\ x\to +\infty,
\end{equation} \begin{equation}\label{equ0'}
               z_0=u_0'>0,
             \end{equation} and
\begin{equation}\label{eqSymm}
  u_0(x)u_0(-x)\equiv c_0.
\end{equation}Moreover, $u_0$ is the unique modulo translations solution of (\ref{eqRedInt}) with the plus sign and (\ref{eqRedIntHet}).\end{pro}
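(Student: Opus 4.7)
The plan is to analyze the autonomous scalar ODE $u'=f(u)$, where $f(u):=\frac{u^2-u^4-c_0^2}{\sqrt{2}\sqrt{u^4+c_0^2}}$, by a standard phase-line argument on the interval $(\bar{u}_0,\bar{v}_0)$, complemented by a substitution-based proof of the symmetry. First, I would factor the numerator as $u^2-u^4-c_0^2=-(u^2-\bar{u}_0^2)(u^2-\bar{v}_0^2)$, which is valid in view of (\ref{equ0v0}); since $\bar{u}_0\bar{v}_0=c_0$ (from (\ref{equ0v0}), as $\bar{u}_0^2\bar{v}_0^2=\frac{1-(1-4c_0^2)}{4}=c_0^2$), the initial value $\sqrt{c_0}=\sqrt{\bar{u}_0\bar{v}_0}$ lies strictly in $(\bar{u}_0,\bar{v}_0)$. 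On this interval $f$ is smooth, strictly positive, and vanishes precisely at the endpoints.

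From this, local existence and uniqueness of a $C^1$ solution with $u_0(0)=\sqrt{c_0}$ is standard, and strict positivity of $u_0'$ (which gives (\ref{equ0'})) is immediate as long as the trajectory stays in $(\bar{u}_0,\bar{v}_0)$. The constant solutions at $\bar{u}_0$ and $\bar{v}_0$ block $u_0$ from reaching the endpoints in finite time by IVP uniqueness, so $u_0$ exists for all $x\in\mathbb{R}$ and is strictly increasing and bounded. The resulting one-sided limits at $\pm\infty$ must be zeros of $f$ in $[\bar{u}_0,\bar{v}_0]$, which forces $u_0\to\bar{u}_0$ as $x\to-\infty$ and $u_0\to\bar{v}_0$ as $x\to+\infty$, proving (\ref{eqRedIntHet}).

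For the symmetry (\ref{eqSymm}) I would introduce $w(x):=c_0/u_0(-x)$ and show that $w$ solves the same IVP as $u_0$. The key algebraic identity is
\[
f\!\left(\frac{c_0}{u}\right)=\frac{c_0}{u^2}\,f(u)\qquad\text{for } u>0,
\]
which follows from $(c_0/u)^2-(c_0/u)^4-c_0^2=(c_0^2/u^4)(u^2-u^4-c_0^2)$ together with $(c_0/u)^4+c_0^2=(c_0^2/u^4)(c_0^2+u^4)$. A direct differentiation gives $w'(x)=c_0\,u_0'(-x)/u_0(-x)^2=c_0\,f(u_0(-x))/u_0(-x)^2$, and inserting the identity in the form $f(u)=(u^2/c_0)f(c_0/u)$ yields $w'=f(w)$. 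Since $w(0)=c_0/\sqrt{c_0}=\sqrt{c_0}=u_0(0)$, uniqueness of the IVP gives $w\equiv u_0$, which is exactly (\ref{eqSymm}).

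Uniqueness modulo translations follows by similar reasoning: any heteroclinic $\tilde{u}$ of $u'=f(u)$ with the asymptotics in (\ref{eqRedIntHet}) must remain in $(\bar{u}_0,\bar{v}_0)$ and be strictly increasing, hence crosses the value $\sqrt{c_0}$ at a unique point $x=a$, and the translate $\tilde{u}(\,\cdot\,+a)$ coincides with $u_0$ by IVP uniqueness. I expect the only mildly delicate step to be the verification of the algebraic identity for $f$ above, which encodes the invariance of (\ref{eqRedInt}) (with the plus sign) under the involution $u\mapsto c_0/u$ combined with $x\mapsto -x$; every other point reduces to routine phase-line analysis for an autonomous scalar ODE.
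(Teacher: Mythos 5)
Your proposal is correct and follows essentially the same route as the paper: a phase-line argument for the autonomous first-order ODE between the consecutive equilibria $\bar{u}_0<\bar{v}_0$ for monotonicity and the heteroclinic limits, IVP uniqueness for the uniqueness claims, and the observation that $x\mapsto c_0/u_0(-x)$ solves the same equation (your identity $f(c_0/u)=\tfrac{c_0}{u^2}f(u)$ is exactly the verification the paper leaves implicit) for the symmetry (\ref{eqSymm}). You have merely written out in detail the steps the paper states tersely; no gap.
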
\begin{proof}
Since  equation (\ref{eqRedInt}) with the plus sign is a first order ODE, solutions with initial value between the consecutive equilibria $\bar{u}_0<\bar{v}_0$ are increasing and satisfy
(\ref{eqRedIntHet}). The uniqueness properties follow directly from the uniqueness of the initial value problem for (\ref{eqRedInt}).
                         It remains  to show (\ref{eqSymm}).
This  follows by observing  that if $u$ satisfies (\ref{eqRedInt}) with the plus sign, then
\[
\tilde{u}(x)=\frac{c_0}{u(-x)}
\]
also satisfies the same equation. Thus, since $u_0(0)=\tilde{u}_0(0)$, we obtain the desired relation.
                       \end{proof}

  For future reference, we note that differentiation of (\ref{eqSymm}) yields
             \begin{equation}\label{eqSymm2}
               z_0(-x)\equiv c_0\frac{z_0(x)}{u_0^2(x)}.
             \end{equation}

In regards with (\ref{eqReduc}), the trajectory $(u_0,z_0)$ lies in the intersection of the unstable manifold $W^u(\bar{u}_0,0)$ of $(\bar{u}_0,0)$ and the stable manifold $W^s(\bar{v}_0,0)$ of $(\bar{v}_0,0)$. We will only be concerned with these parts of the aforementioned invariant manifolds.
The lifting of $(u_0,z_0)$ on $\mathcal{M}_0$ is called a \emph{singular heteroclinic connection}.


\subsection{Normal hyperbolicity of the slow manifold}\label{subsecNH} The slow manifold $\mathcal{M}_0$ is \emph{normally hyperbolic}
if and only if the linearization of the righthand side of the first two equations in (\ref{eqSFlimit}) with respect to $(p,q)$, at any point $(p,q,u,z)$ on $\mathcal{M}_0$, does not have eigenvalues on the imaginary axis. The aforementioned linearization is
\[
\left(\begin{array}{cc}
        0 & 1 \\
        u^2+\frac{c_0^2}{u^2} & 0
      \end{array}
 \right),
\]whose eigenvalues are $\lambda_\pm=\pm\sqrt{u^2+\frac{c_0^2}{u^2}}$. Hence, the slow manifold $\mathcal{M}_0$ is normally hyperbolic.

\subsection{Local persistence of $\mathcal{M}_0$: The invariant manifold $\mathcal{M}_\varepsilon$}\label{secMeps} Let $\mathcal{K}\subset \left\{u>0,\ z\in\mathbb{R}\right\}$ be a compact, simply connected domain which contains the heteroclinic orbit $(u_0,z_0)$, and whose boundary is a
$C^\infty$ curve. As a  consequence of Fenichel's first theorem (see \cite{fenichelJDE}, \cite{jones1995geometric} or \cite[Ch. 3]{kuhen}), we deduce that
the restriction of $\mathcal{M}_0$ over $\mathcal{K}$ perturbs smoothly for small $\varepsilon \geq 0$ to a locally invariant, normally hyperbolic manifold $\mathcal{M}_\varepsilon$ for (\ref{eqSF}).
More precisely,  given an integer $m\geq 1$,  there is an $\varepsilon_0>0$ and functions $h_i(u,z,\varepsilon)\in C^m\left(\mathcal{K}\times [0,\varepsilon_0) \right)$,
$i=1,2$,  such that the manifold $\mathcal{M}_\varepsilon$ described by
\begin{equation}\label{eqh}
p=2c_0\frac{u^2+z^2}{u^4+c_0^2}-2c_0+\varepsilon h_1(u,z,\varepsilon),
\ \ q=\varepsilon h_2(u,z,\varepsilon),\ \ (u,z)\in\mathcal{K},\end{equation}
is a normally hyperbolic, locally invariant manifold for (\ref{eqSF}) if $\varepsilon\in (0,\varepsilon_0)$.

 By the normal hyperbolicity of $M_\varepsilon$, and by possibly decreasing the value of $\varepsilon_0>0$, we deduce that the equilibria $(0,0,\bar{u},0)$ and $(0,0,\bar{v},0)$ of (\ref{eqSF})  lie on $\mathcal{M}_\varepsilon$, i.e.,
\begin{equation}\label{eqEquiv0}h_i\left(\bar{u},0,\varepsilon\right)=0,\ \ h_i\left(\bar{v},0,\varepsilon\right)=0,\ \ i=1,2,\ \ \varepsilon \in [0,\varepsilon_0).\end{equation} 

\subsection{Theorem \ref{thm1}}
The above leads us to the main result of this section.
\begin{thm}\label{thm1}
For each $\varepsilon>0$ sufficiently small, there is a heteroclinic solution $(p_\varepsilon,q_\varepsilon,u_\varepsilon,z_\varepsilon)$ of (\ref{eqSF}) satisfying (\ref{eqConnectNewest}) which lies on $\mathcal{M}_\varepsilon$ such that
\begin{equation}\label{eqEstims1}
     u_\varepsilon=u_0+O(\varepsilon),\ \  \   z_\varepsilon=z_0+O(\varepsilon),
   \end{equation}and
   \begin{equation}\label{eqproduct}
     |u_\varepsilon-\bar{u}|\cdot|u_\varepsilon-\bar{v}|+|z_\varepsilon|=O(1)e^{-\left(\sqrt{2}\sqrt{1-4c_0^2}+O(\varepsilon)\right)|x|},
   \end{equation}
uniformly in $\mathbb{R}$, as $\varepsilon\to 0$, where $(u_0,z_0)$ is as in Proposition \ref{proU0}. Furthermore,
\begin{equation}\label{eqz>0}
  z_\varepsilon>0.
\end{equation}
More precisely, the following estimates hold:
\begin{equation}\label{eqexp22}
   \begin{array}{c}
     p_\varepsilon=2c_0\frac{u_\varepsilon^2+z_\varepsilon^2}{u_\varepsilon^4+c_0^2}-2c_0+O(\varepsilon)e^{-\left(\sqrt{2}\sqrt{1-4c_0^2}+O(\varepsilon)\right)|x|}, \\ \\

     q_\varepsilon=O(\varepsilon)e^{-\left(\sqrt{2}\sqrt{1-4c_0^2}+O(\varepsilon)\right)|x|},
   \end{array}
\end{equation}
uniformly in $\mathbb{R}$, as $\varepsilon\to 0$.
\end{thm}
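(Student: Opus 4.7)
The plan is to work entirely on the locally invariant center manifold $\mathcal{M}_\varepsilon$ constructed in Section~\ref{secMeps}, reducing the problem to a regular perturbation of the planar reduced system (\ref{eqReduc}) and then using the conserved Hamiltonian $\hat{H}$ to force the persistence of the singular heteroclinic. First, I substitute (\ref{eqh}) into the $(u,z)$ equations of (\ref{eqSF}) to obtain a reduced two-dimensional slow system
\[
u'=z,\qquad z'=u^3-u+\frac{c_0^2}{u}\!\left(2\frac{u^2+z^2}{u^4+c_0^2}-1\right)+\varepsilon\,F(u,z,\varepsilon),
\]
with $F\in C^m(\mathcal{K}\times[0,\varepsilon_0))$. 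By (\ref{eqEquiv0}) the points $(\bar u,0)$ and $(\bar v,0)$ remain equilibria; the slow eigenvalues in (\ref{eqEV-}) are hyperbolic and uniformly bounded away from zero, so each is a saddle with one-dimensional stable and unstable manifolds on $\mathcal{M}_\varepsilon$. Standard smooth dependence of local invariant manifolds then gives that $W^u_\varepsilon(\bar u,0)$ and $W^s_\varepsilon(\bar v,0)$ are $O(\varepsilon)$-close in $C^1$ to $W^u(\bar u_0,0)$ and $W^s(\bar v_0,0)$ from Subsection~\ref{subsecUo}, and that they emerge tangent to the eigenvectors of the perturbed saddles.

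Next I exploit the Hamiltonian. Plugging (\ref{eqh}) into (\ref{eqHamilpq}) yields a function $\hat{H}_\varepsilon(u,z):=\hat{H}|_{\mathcal{M}_\varepsilon}$ of the form $\hat H_\varepsilon(u,z)=H_0(u,z)+\varepsilon\,G(u,z,\varepsilon)$ with $G\in C^m$, which is conserved along the reduced flow. Because of (\ref{eqEquiv0}) we have $\hat H_\varepsilon(\bar u,0)=\hat H_\varepsilon(\bar v,0)=0$, and by (\ref{eqH=0}) both invariant manifolds $W^u_\varepsilon(\bar u,0)$ and $W^s_\varepsilon(\bar v,0)$ lie entirely in the level set $\{\hat H_\varepsilon=0\}\cap\mathcal{M}_\varepsilon$. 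On $\mathcal{M}_0$, the explicit formula (\ref{eqHamiltReduc}) shows that $\nabla H_0$ does not vanish along the singular heteroclinic $(u_0,z_0)$ (it would force $z_0=0$ and $u_0^4+c_0^2-u_0^2=0$, which holds only at the saddles). Hence, by the implicit function theorem applied tubularly along $(u_0,z_0)$, the zero set $\{\hat H_\varepsilon=0\}$ is, in a fixed tubular neighbourhood of the singular orbit, a single smooth one-dimensional curve $\Gamma_\varepsilon$ that varies smoothly in $\varepsilon$, meets each saddle transversely through the hyperbolic eigenlines, and coincides with the singular heteroclinic at $\varepsilon=0$.

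The persistence argument is then immediate: in a fixed neighbourhood of the singular orbit both $W^u_\varepsilon(\bar u,0)$ and $W^s_\varepsilon(\bar v,0)$ are one-dimensional, smooth, and contained in the one-dimensional curve $\Gamma_\varepsilon$, so they must coincide there, providing the heteroclinic $(u_\varepsilon,z_\varepsilon)$. The $C^1$-proximity $W^{u,s}_\varepsilon$ to the $\varepsilon=0$ objects then yields (\ref{eqEstims1}), and pulling back via (\ref{eqh}) combined with (\ref{eqEquiv0}) (which forces $h_i$ to vanish at the saddles, hence is $O(1)$ times a distance to the saddle) gives (\ref{eqexp22}) once exponential decay of $u_\varepsilon-\bar u$, $u_\varepsilon-\bar v$, and $z_\varepsilon$ is in place. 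The exponential estimate (\ref{eqproduct}) follows from linearizing the reduced system at each saddle, using the eigenvalues $\pm\sqrt{2}\sqrt{1-4c_0^2}+O(\varepsilon)$, and applying standard stable-manifold exponential decay along $\Gamma_\varepsilon$; the bound on the product $|u_\varepsilon-\bar u|\cdot|u_\varepsilon-\bar v|$ follows by stitching together the exponential estimate valid near $-\infty$ with the one valid near $+\infty$ via uniform boundedness in between. Finally, $z_\varepsilon>0$ follows from $z_0>0$ (equation (\ref{equ0'})) together with the uniform closeness of the orbits on compact sets and the fact that on $\Gamma_\varepsilon$ near each saddle $z_\varepsilon$ is controlled in sign by the sign of the relevant eigenvector.

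The main obstacle is the persistence step: on $\mathcal{M}_\varepsilon$ the stable and unstable curves are both one-dimensional in a two-dimensional ambient space, so a generic perturbation of the conservative limit system would destroy the connection via a Melnikov-type splitting. What saves the argument is that the full system (\ref{eqSF}) is itself Hamiltonian, so the splitting function is identically zero and the coincidence of $W^u_\varepsilon$ and $W^s_\varepsilon$ is automatic once one verifies that $\{\hat H_\varepsilon=0\}$ really is a single smooth one-dimensional branch along the entire singular orbit (that is, does not pinch or bifurcate away from the saddles); this is where the non-vanishing of $\nabla H_0$ along $(u_0,z_0)$ and a careful choice of the compact set $\mathcal{K}$ enter.
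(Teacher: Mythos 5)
Your overall strategy is the paper's: reduce to the invariant manifold $\mathcal{M}_\varepsilon$ via (\ref{eqh}), keep the perturbed saddles by (\ref{eqEquiv0}), and use conservation of $\hat H$ together with the nondegeneracy supplied by $z_0>0$ to force the two one-dimensional invariant manifolds of the $\varepsilon$-reduced planar system to coincide; the closing estimates (\ref{eqproduct}), (\ref{eqexp22}), (\ref{eqz>0}) are then obtained exactly as in the paper, by linearizing at the perturbed saddles and using (\ref{eqh}), (\ref{eqEquiv0}). However, the central persistence step as you state it has a genuine flaw. You claim that $\{\hat H_\varepsilon=0\}$ is, in a fixed tubular neighbourhood of the singular orbit, a single smooth one-dimensional curve $\Gamma_\varepsilon$ which ``meets each saddle transversely through the hyperbolic eigenlines.'' This cannot be right: as you yourself note, $\nabla H_0$ vanishes precisely at $(\bar u_0,0)$ and $(\bar v_0,0)$, and at a nondegenerate saddle the zero level of the Hamiltonian is locally a transverse crossing of the stable and unstable branches (a Morse set of type $\{xy=0\}$), not a smooth arc; moreover, even at $\varepsilon=0$, any tube around the \emph{closure} of the heteroclinic contains additional pieces of $\{H_0=0\}$ emanating from the saddles (for instance the symmetric branch with $z<0$ coming from (\ref{eqRedInt}) with the minus sign). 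So the implicit function theorem cannot be applied ``tubularly'' along the closed orbit. In addition, the final inference ``both manifolds are contained in the one-dimensional curve $\Gamma_\varepsilon$, so they must coincide'' is incomplete even where $\Gamma_\varepsilon$ really is a curve: two disjoint sub-arcs of the same curve need not coincide, so you still owe an argument producing a common point.

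Both defects are repaired by the localization the paper uses. Fix the transversal slice $u=u_0(0)$ (where $z_0(0)>0$ by (\ref{equ0'})); the unstable and stable manifolds of the $\varepsilon$-reduced flow cross it at points $z_\varepsilon^-$, $z_\varepsilon^+$ that are $O(\varepsilon)$-close to $z_0(0)$. Lifting these points to $\mathcal{M}_\varepsilon$ via (\ref{eqh}) and invoking (\ref{eqH=0}), both lifted points solve the algebraic system consisting of the two graph equations and $\hat H=0$. The Jacobian of this system with respect to $(p,q,z)$ at $(p_0,q_0,u_0(0),z_0(0),0)$ is triangular with determinant $z_0(0)\left(1+c_0^2/u_0^4(0)\right)\neq 0$, so the implicit function theorem gives uniqueness of the solution on the slice for small $\varepsilon$, hence $z_\varepsilon^-=z_\varepsilon^+$ and the two one-dimensional manifolds coincide. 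With this replacement of your ``global level-set curve'' step, your argument agrees with the paper's proof; the remaining parts of your write-up (saddle persistence, exponential decay, sign of $z_\varepsilon$, and the estimates (\ref{eqexp22})) are in order.
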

\begin{proof}
 Substituting (\ref{eqh}) in the last two equations of (\ref{eqSF}) determines the flow of the restriction of the latter system on its invariant manifold $\mathcal{M}_\varepsilon$. The resulting system is a smooth $O(\varepsilon)$-perturbation of the reduced system (\ref{eqReduc}). For definiteness, we will refer to it as the $\varepsilon$-reduced system.

 The unstable manifold $W^u(\bar{u}_0,0)$ of $(\bar{u}_0,0)$ and the stable manifold $W^s(\bar{v}_0,0)$ of $(\bar{v}_0,0)$ for (\ref{eqReduc}) perturb smoothly to
 the unstable manifold $W^u(\bar{u},0)$ of $(\bar{u},0)$ and the stable manifold $W^s(\bar{v},0)$ of $(\bar{v},0)$ for the $\varepsilon$-reduced problem, respectively.
Our goal is to show that $W^u(\bar{u},0)$ and  $W^s(\bar{v},0)$ meet for sufficiently small $\varepsilon>0$. Thus, they have to coincide since they are
one-dimensional.  The desired heteroclinic connection for (\ref{eqSF}) will be provided by their lifting on $\mathcal{M}_\varepsilon$.

Let us show that $W^u(\bar{u},0)$ and  $W^s(\bar{v},0)$ meet on the line $l=\{u=u_0(0),\ z\in \mathbb{R}\}$ (recall (\ref{eqHalf})). We note that there is nothing special about the choice of this line, the important thing is that it is  transverse to $(u_0,z_0)$ (recall (\ref{equ0'})).
As we have said, the manifolds $W^u(\bar{u},0)$ and  $W^s(\bar{v},0)$ depend smoothly on $\varepsilon\geq 0$ small. Thus, they intersect the line $l$ at some points $\left(u_0(0),z_\varepsilon^-\right)$ and $\left(u_0(0),z_\varepsilon^+\right)$, respectively, such that
\begin{equation}\label{eqZdiff}
  z_\varepsilon^\pm-z_0(0)=O(\varepsilon)\ \ \textrm{as}\ \varepsilon\to 0,
\end{equation}
(recall (\ref{eqHalf})). Keep in mind that our goal is to show that \begin{equation}\label{eqZ=}
                                                                      z_\varepsilon^-=z_\varepsilon^+.
                                                                    \end{equation} To this end, let $\left(p_\varepsilon^-,q_\varepsilon^-,u_0(0),z_\varepsilon^-\right)$ and $\left(p_\varepsilon^+,q_\varepsilon^+,u_0(0),z_\varepsilon^+\right)$ be the liftings on $\mathcal{M}_\varepsilon$ of
$\left(u_0(0),z_\varepsilon^-\right)$ and $\left(u_0(0),z_\varepsilon^+\right)$  via (\ref{eqh}), respectively.
Thanks to the aforementioned smoothness with respect to small $\varepsilon\geq 0$, it  holds
\begin{equation}\label{eqpqDiff}
  p_\varepsilon^\pm-p_0=O(\varepsilon)\ \textrm{and}\ q_\varepsilon^\pm-q_0=O(\varepsilon)\ \textrm{as}\ \varepsilon\to 0,
\end{equation}
where $(p_0,q_0)$ is the image of $\left(u_0(0),z_0(0) \right)$ on the graph of $\mathcal{M}_0$.
Since $\left(u_0(0),z_\varepsilon^-\right)\in W^u(\bar{u},0)$, we infer from (\ref{eqh})-(\ref{eqEquiv0}) that
\begin{equation}\label{eqW-}
  \left(p_\varepsilon^-,q_\varepsilon^-,u_0(0),z_\varepsilon^-\right)\in W^u(0,0,\bar{u},0)\cap \mathcal{M}_\varepsilon.
\end{equation}
Similarly we have
\begin{equation}\label{eqW+}
  \left(p_\varepsilon^+,q_\varepsilon^+,u_0(0),z_\varepsilon^+\right)\in W^s(0,0,\bar{v},0)\cap \mathcal{M}_\varepsilon.
\end{equation}
Hence, in view of (\ref{eqH=0}), we find that both  $\left(p_\varepsilon^-,q_\varepsilon^-,u_0(0),z_\varepsilon^-\right)$
and $\left(p_\varepsilon^+,q_\varepsilon^+,u_0(0),z_\varepsilon^+\right)$ satisfy  the equation \begin{equation}\label{eqH=0(new)}
                                                                                      \hat{H}(p,q,u,z)=0.
                                                                                    \end{equation}

We will show that in some (fixed) neighborhood of $\left(p_0,q_0,u_0(0),z_0(0) \right)$ the algebraic system comprised of the two equations in (\ref{eqh}) and (\ref{eqH=0(new)}) admits a unique solution $(p,q,u,z)$, provided that $\varepsilon>0$ is sufficiently small. Then, taking into account (\ref{eqZdiff}) and (\ref{eqpqDiff}), this would imply the desired relation (\ref{eqZ=}). We will accomplish this by means of the implicit function theorem, applied to the mapping $F:\mathbb{R}^2\times \mathcal{K}\times [0,\varepsilon_0)\to \mathbb{R}^3$ defined by
\begin{equation}\label{eqMapping}
F\left(\begin{array}{c}
         p \\
         q \\
         u \\
         z \\
         \varepsilon
       \end{array}
 \right)=\left(\begin{array}{c}
p- 2c_0\frac{u^2+z^2}{u^4+c_0^2}+2c_0-\varepsilon h_1(u,z,\varepsilon) \\
           q-\varepsilon h_2(u,z,\varepsilon) \\
           \hat{H}(p,q,u,z,\varepsilon)
         \end{array}\right),
\end{equation}
where the set $\mathcal{K}$ was defined in the beginning of Subsection \ref{secMeps}, $h_1,\ h_2$ are as in (\ref{eqh}),  and $\hat{H}$ is as in (\ref{eqHamilpq}).
In view of the comments leading to (\ref{eqh}), and (\ref{eqHamilpq}) (keeping in mind our smoothness assumption on $\tilde{\omega}$), the above mapping is $C^2$ in its domain  of definition, having decreased the value of $\varepsilon_0>0$ if needed.
Keeping in mind (\ref{eqW-}), (\ref{eqW+}) and (\ref{eqH=0(new)}), we find that
\begin{equation}\label{eqF=0}
F\left(p_\varepsilon^\pm,q_\varepsilon^\pm,u_0(0),z_\varepsilon^\pm,\varepsilon\right)=0\ \ \textrm{for small}\ \varepsilon>0.
\end{equation}In fact, the above relation continues to hold for $\varepsilon=0$ as \[F\left(p_0,q_0,u_0(0),z_0(0),0\right)=0. \]
Moreover,
\[
\partial_{pqz}F\left(\begin{array}{c}
                       p \\
                       q \\
                       u \\
                       z \\
                       0
                     \end{array}
 \right)=\left(\begin{array}{ccc}
                 1 & 0 & -4c_0\frac{z}{u^4+c_0^2} \\
                 0 & 1 & 0 \\
                 0 & 0 & z+c_0^2\frac{z}{u^4}
               \end{array}
  \right).
\]
In particular, the above matrix is invertible at $(p_0,q_0,u_0(0),z_0(0),0)$ (recall (\ref{equ0'})).
Hence, we deduce by the implicit function theorem that
there exists a $\delta > 0$ such that, for $\left|u-u_0(0)\right|<\delta$
and $\varepsilon\in [0, \delta)$, the
equation
$F (p, q, u, z, \varepsilon) = (0, 0, 0)$
has at most one solution $(p, q, z)$ such that $|p - p_0| < \delta$, $|q - q_0| < \delta$
and $|z - z_0(0)| < \delta$. Then, applying this property with $u=u_0(0)$, we
infer from (\ref{eqF=0}), having in mind (\ref{eqZdiff}) and (\ref{eqpqDiff}), that the desired relation (\ref{eqZ=}) is true if
$\varepsilon>0$ is sufficiently small.

So far we have shown that there exists a heteroclinic connection for (\ref{eqSF}) on $\mathcal{M}_\varepsilon$ satisfying (\ref{eqEstims1}), after a suitable translation. The exponential decay estimate in (\ref{eqproduct}) follows from local analysis at the equilibria $(\bar{u},\bar{v})$ and $(\bar{v},\bar{u})$ of the $\varepsilon$-reduced problem. Indeed, the linearization of the $\varepsilon$-reduced problem at both equilibria has eigenvalues $\pm \sqrt{2}\sqrt{1-4c_0^2}+O(\varepsilon)$ (recall the last part of Subsection \ref{secSlowRed}).  The estimates in (\ref{eqexp22}) then follow  by recalling (\ref{eqh}) and (\ref{eqEquiv0}).

Lastly, the property (\ref{eqz>0}) is a direct consequence of (\ref{equ0'}) and the fact that $W^u(\bar{u}_0,0)$ and $W^s(\bar{v}_0,0)$ cross $z=0$ transversely at $(\bar{u}_0,0)$ and $(\bar{v}_0,0)$, respectively (recall again the last part of Subsection \ref{secSlowRed}).
\end{proof}

We can also show the local uniqueness of the heteroclinic connection of Theorem \ref{thm1}.
\begin{pro}\label{proUniq}
There exists  a small  fixed neighborhood of  the orbit \[\Gamma_\varepsilon=\left\{\left(p_\varepsilon(x),q_\varepsilon(x),u_\varepsilon(x),z_\varepsilon(x)\right),\ \ x\in\mathbb{R}\right\}\] inside which there is no other  connecting orbit for (\ref{eqSF})-(\ref{eqConnectNewest}) if $\varepsilon>0$ is sufficiently small.
\end{pro}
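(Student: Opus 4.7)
The plan is to reduce the uniqueness assertion to the reduced flow on $\mathcal{M}_\varepsilon$ and then exploit that the saddles' invariant manifolds there are one-dimensional. Let $\mathcal{U}$ denote a fixed (that is, $\varepsilon$-independent) tubular neighborhood of $\Gamma_\varepsilon$ of small radius $\delta_0$, and suppose that $\tilde{\Gamma}$ is another heteroclinic orbit for (\ref{eqSF})-(\ref{eqConnectNewest}) contained in $\mathcal{U}$. The decisive step is to show that $\tilde{\Gamma}\subset\mathcal{M}_\varepsilon$; once this is done the conclusion is immediate.

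For the inclusion, recall from (\ref{eqEV-}) that the two-dimensional unstable manifold $W^u(0,0,\bar{u},0)$ has one slow eigendirection tangent to $\mathcal{M}_\varepsilon$, with eigenvalue $\sqrt{2}\sqrt{1-4c_0^2}+O(\varepsilon)$, and one fast eigendirection transverse to $\mathcal{M}_\varepsilon$, with eigenvalue $1/\varepsilon+O(1)$. Fenichel's theorem on strong unstable foliations (see \cite{fenichelJDE,jones1995geometric,kuhen}) identifies $W^u(0,0,\bar{u},0)\cap\mathcal{M}_\varepsilon$ with the one-dimensional slow unstable manifold $W^u(\bar{u},0)$ of the $\varepsilon$-reduced flow, and shows that every trajectory in $W^u(0,0,\bar{u},0)\setminus\mathcal{M}_\varepsilon$ separates from $\mathcal{M}_\varepsilon$ at the fast exponential rate $e^{x/\varepsilon}$. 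Choosing $\delta_0$ no larger than the corresponding $\varepsilon$-independent Fenichel tube radius, one sees that $\tilde{\Gamma}\subset\mathcal{U}$ combined with $\tilde{\Gamma}(x)\to(0,0,\bar{u},0)$ as $x\to -\infty$ forces $\tilde{\Gamma}$ to lie entirely on $W^u(\bar{u},0)\subset\mathcal{M}_\varepsilon$. The symmetric argument at $(0,0,\bar{v},0)$ shows $\tilde{\Gamma}\subset W^s(\bar{v},0)\subset\mathcal{M}_\varepsilon$ as well.

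Once $\tilde{\Gamma}$ is known to lie on $\mathcal{M}_\varepsilon$, coincidence with $\Gamma_\varepsilon$ as a subset of phase space is automatic: the $\varepsilon$-reduced flow on the two-dimensional manifold $\mathcal{M}_\varepsilon$ has the two hyperbolic saddles $(\bar{u},0)$ and $(\bar{v},0)$ with one-dimensional stable and unstable manifolds, and Theorem \ref{thm1} already realises $\Gamma_\varepsilon=W^u(\bar{u},0)=W^s(\bar{v},0)$ between them, so $\tilde{\Gamma}$ and $\Gamma_\varepsilon$ agree as sets (they may differ by a time translation, but this produces the same orbit). I expect the main technical obstacle to be the quantitative separation statement in the previous paragraph, namely securing a $\delta_0>0$ independent of $\varepsilon$ beyond which non-slow-manifold orbits must escape. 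The cleanest way to dispatch it is to pass to Fenichel normal coordinates $(y,w)$ near $\mathcal{M}_\varepsilon$, in which the flow decouples into a slow block $\dot y=Y(y,\varepsilon)$ and a fast block $\varepsilon\dot w=\Lambda(y,\varepsilon)w+O(|w|^2)$ with $\Lambda$ hyperbolic, and to apply Gronwall's lemma to the fast block. Alternatively, and more in the spirit of the existence argument, once $\tilde{\Gamma}\subset\mathcal{M}_\varepsilon$ is established, one can re-run the implicit function theorem step applied to the map $F$ in (\ref{eqMapping}) to pin down the crossing of $\tilde{\Gamma}$ with the transverse section $\{u=u_0(0)\}$ uniquely, which forces $\tilde{\Gamma}=\Gamma_\varepsilon$.
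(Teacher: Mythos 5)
Your argument is correct and follows essentially the same route as the paper: first force the competing orbit onto $\mathcal{M}_\varepsilon$ via normal hyperbolicity (the paper invokes this tersely, ``by the same reasoning as for reaching (\ref{eqEquiv0})'', while you make it explicit through the fast separation rate along the Fenichel fibration), and then conclude by identifying its projection with the unique one-dimensional connection $W^u(\bar{u},0)=W^s(\bar{v},0)$ of the $\varepsilon$-reduced flow on $\mathcal{M}_\varepsilon$. The alternative closing step you mention (re-running the implicit function theorem with the map $F$) is not needed once the orbit is known to lie on $\mathcal{M}_\varepsilon$, exactly as in the paper.
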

\begin{proof}
Let us suppose that in some fixed neighborhood of $\Gamma_\varepsilon$ there was another connecting orbit $\tilde{\Gamma}_\varepsilon$ for small $\varepsilon>0$. Then, provided that the aforementioned neighborhood is sufficiently small, the curve $\tilde{\Gamma}_\varepsilon$ would also lie on $\mathcal{M}_\varepsilon$ if $\varepsilon>0$ is sufficiently small (by the same reasoning as for reaching (\ref{eqEquiv0})).
Hence, the    projection of $\tilde{\Gamma}_\varepsilon$ on the $uz$ plane would also be a connecting orbit for the same $\varepsilon$-reduced problem as the corresponding projection of $\Gamma_\varepsilon$. In other words, the aforementioned projections coincide with the one-dimensional intersection $W^u(\bar{u},0)\cap W^s(\bar{v},0)$.
This clearly implies the desired local uniqueness property.
 \end{proof}
\subsection{Proof of Theorem \ref{thmMain}}\label{subsecProof1}\begin{proof}
The desired solution $(u_\Lambda,v_\Lambda)$ is  provided by Theorem \ref{thm1},  keeping track of the definitions (\ref{eqhx}), (\ref{eq32}), (\ref{eq33}) (where with some abuse of notation we identify $u_\Lambda$, $\Lambda\gg 1$ with $u_\varepsilon$, $\varepsilon\ll 1$), and translating it so that
\begin{equation}\label{eqEqual}
  u_\Lambda(0)=v_\Lambda(0).
\end{equation}
We point out that such a translation does not affect the estimates of the aforementioned theorem, which imply the validity of (\ref{eqThm2}), (\ref{eqThm3}), (\ref{eqThm3+}) and (\ref{eqThm4}).

It remains to verify (\ref{eqThm1}).
To this end, the main observation is that the pair \[\left(\tilde{u}_\Lambda(x),\tilde{v}_\Lambda(x)\right)=\left(v_\Lambda(-x),u_\Lambda(-x)\right)\] is also a solution to (\ref{eqEq})-(\ref{eqConnect}). Let $(\tilde{p},\tilde{q},\tilde{u},\tilde{z})$ be the corresponding solution of (\ref{eqSF})-(\ref{eqConnectNewest}) that is given through (\ref{eqhx}), (\ref{eq32}) and (\ref{eq33}) with $(\tilde{u},\tilde{v})$ in place of $(u,v)$.
By virtue of the estimates in Theorem \ref{thm1}, and (\ref{eqSymm}), we find that
\[
\tilde{u}_\varepsilon(x)=v_\varepsilon(-x)=\frac{c_0}{u_\varepsilon(-x)}+O(\varepsilon)=\frac{c_0}{u_0(-x)}+O(\varepsilon)=u_0(x)+O(\varepsilon),
\]
uniformly for $x\in \mathbb{R}$, as $\varepsilon \to 0$.
Similarly
\[
\tilde{z}_\varepsilon=z_0+O(\varepsilon),
\]
uniformly in $\mathbb{R}$, as $\varepsilon \to 0$.
In turn, by virtue of (\ref{eqhx}), (\ref{eq33}), (\ref{eqSymm}), (\ref{eqSymm2}) and (\ref{eqexp22}), we obtain  that
\[
\tilde{p}(x)=p(-x)=p(x)+O(\varepsilon)\ \ \textrm{and}\ \ \tilde{q}(x)=-q(-x)=O(\varepsilon),
\]
uniformly in $\mathbb{R}$, as $\varepsilon \to 0$.
Consequently,  we infer from Proposition \ref{proUniq} and (\ref{eqEqual}) that
\[
(\tilde{p},\tilde{q},\tilde{u},\tilde{z})\equiv (p,q,u,z),
\]
which clearly implies the validity of (\ref{eqThm1}).\end{proof}
 \section{The weak separation limit}\label{secWeakSepar}We consider the regime $\Lambda \to 1^+$ in Theorem \ref{thm2}.\subsection{Slow-fast formulation}
We set as small parameter
\begin{equation}\label{eqEpsilon}
\varepsilon=\sqrt{\Lambda-1},
\end{equation}
and consider the slow variable
\begin{equation}\label{eqxz}
y=\varepsilon x.
\end{equation}
Then, system (\ref{eqEq}) is equivalent to
\begin{equation}\label{eqEqGenEpsilon}
	\left\{
	\begin{array}{rcl}
	 \varepsilon^2 u''&=&u^3-u+  v^2 u+ \varepsilon^2 v^2 u-\omega v, \\
&&	\\
	  \varepsilon^2v''&=&v^3-v+  u^2 v+ \varepsilon^2 u^2 v-\omega u,
	\end{array}
	\right.
	\end{equation}
where the derivative is taken with respect to the $y$ variable. The limit   (\ref{eqConnect})  remains the same.

Motivated by \cite{barankov}, since we expect that $u^2+v^2\to 1$  as $\varepsilon\to 0$, we express $(u,v)$ in polar coordinates as
\begin{equation}\label{eqPolar}
u=R\cos \frac{\varphi}{2},\ \ v=R\sin \frac{\varphi}{2},
\end{equation}
for $R>0$ and $0<\varphi<\pi$. Then,  system (\ref{eql1l2}) for the equilibria decouples into
\begin{equation}\label{eqEquilNew}
  \bar R=1\ \ \textrm{and}\ \ \sin \bar \varphi=\frac{2\omega}{\Lambda-1}.
\end{equation} Under (\ref{eqIneq1}), let $\bar{\varphi}\in (0,\frac{\pi}{2})$ denote the unique solution of (\ref{eqEquilNew}).
 We write (\ref{eqEqGenEpsilon})-(\ref{eqConnect}) equivalently as
\[
\begin{split}
\varepsilon^2 \left[R''- \frac{R}{4} (\varphi')^2\right]=&R^3-R+\frac{\varepsilon^2}{2}R^3\sin^2\varphi-\omega R \sin \varphi,
\end{split}
\]
\[
\begin{split}
\frac{\varepsilon^2}{2} \left(R \varphi''+2  R'\varphi'\right)=&\frac{\varepsilon^2}{4} R^3\sin(2\varphi)-\omega R \cos \varphi;
\end{split}
\]
\[
R\to 1 \ \ \textrm{as}\ y\to \pm \infty,
\]
\[
\varphi \to \pi-\bar{\varphi}\ \textrm{as}\ y\to -\infty,\ \ \varphi \to \bar{\varphi}\ \textrm{as}\ y\to +\infty.
\]
Subsequently, we blow-up the neighborhood near $R=1$ by setting
\begin{equation}\label{eqR}R=1-\varepsilon^2 w, \end{equation}
and get the equivalent problem:
\[
\begin{split}
-\varepsilon^2w''- \frac{1}{4}(1-\varepsilon^2 w) (\varphi')^2 =&(1-\varepsilon^2 w)(\varepsilon^2w^2-2w)\\&+\frac{1}{2} (1-\varepsilon^2 w)^3\sin^2\varphi-\frac{\omega}{\varepsilon^2}(1-\varepsilon^2w)\sin \varphi,
\end{split}
\]
\[
\begin{split}
(1-\varepsilon^2 w) \varphi''-2 \varepsilon^2 w'\varphi'=&\frac{1}{2}(1-\varepsilon^2 w)^3\sin (2\varphi)-2\frac{\omega}{\varepsilon^2}(1-\varepsilon^2w)\cos \varphi;
\end{split}
\]
\[
w\to 0 \ \ \textrm{as}\ y\to \pm \infty,
\ \
\varphi \to \pi-\bar{\varphi}\ \textrm{as}\ y\to -\infty,\ \ \varphi \to \bar{\varphi}\ \textrm{as}\ y\to +\infty.
\]
Now we can define
\begin{equation}\label{eqTransFi}
w_1=w,\ w_2=\varepsilon w_1',\ \varphi_1= \varphi,\ \varphi_2=\varphi_1',
\end{equation}
and get the following equivalent slow system, with $(w_1,w_2)$ being the fast variables and $(\varphi_1,\varphi_2)$ the slow ones:
\begin{equation}\label{eqSF1}\left\{\begin{array}{rcl}
    \varepsilon w_1' & = & w_2, \\
      &   &   \\
    \varepsilon w_2' & = & - \frac{1}{4}(1-\varepsilon^2 w_1) \varphi_2^2 -(1-\varepsilon^2 w_1)(\varepsilon^2w_1^2-2w_1) \\
      &   & - \frac{1}{2}(1-\varepsilon^2 w_1)^3\sin^2\varphi_1+\frac{\omega}{\varepsilon^2}(1-\varepsilon^2w_1)\sin \varphi_1, \\
      &   &   \\
    \varphi_1' & = & \varphi_2, \\
      &   &   \\
    \varphi_2' & = & \frac{2 \varepsilon w_2\varphi_2}{1-\varepsilon^2 w_1}+\frac{1}{2}(1-\varepsilon^2 w_1)^2\sin (2\varphi_1)-2\frac{\omega}{\varepsilon^2}\cos \varphi_1,
  \end{array}\right.
\end{equation}together with the conditions
\begin{equation}\label{eqSFbdryu1}
\left\{\begin{array}{l}
  w_1,\ w_2\to 0 \ \ \textrm{as}\ y\to \pm \infty, \\
    \\
 \varphi_1 \to \pi-\bar{\varphi}\ \textrm{as}\ y\to -\infty,\ \ \varphi_1 \to \bar{\varphi}\ \textrm{as}\ y\to +\infty,\ \ \varphi_2\to 0 \ \textrm{as}\ y\to \pm \infty.
\end{array}\right.
\end{equation}

It is easy to check that the eigenvalues of the linearization of (\ref{eqSF1}) at both   equilibria $(0,0,\pi-\bar{\varphi},0)$ and $(0,0,\bar{\varphi},0)$ that we wish to connect are
\begin{equation}\label{eqEVs1}
\pm \frac{\sqrt{2}}{\varepsilon}+O(1), \  \pm \sqrt{1-4c_0^2}+O(\varepsilon) \ \textrm{as}\ \varepsilon\to 0.
\end{equation}Therefore, each of these equilibria is a saddle with two-dimensional (global) stable and unstable manifolds. In light of (\ref{eqConnectNewest}), we will be interested in the unstable manifold $W^u(0,0,\pi-\bar{\varphi},0)$ of $(0,0,\pi-\bar{\varphi},0)$ and the stable manifold $W^s(0,0,\bar{\varphi},0)$ of $(0,0,\bar{\varphi},0)$.

By virtue of (\ref{eqHamil}), we find that the Hamiltonian $\hat{H}$ defined by
 \begin{equation}\label{eqRHS}\begin{split}\frac{2}{\varepsilon^2}\hat{H}
 =& \varepsilon^2 w_2^2 +\frac{1}{4}(1-\varepsilon^2 w_1)^2\varphi_2^2  \\
 & -\frac{\varepsilon^2}{2}(2w_1-\varepsilon^2 w_1^2)^2- \left[\frac{(1-\varepsilon^2 w_1)^2}{2} \sin \varphi_1-c_0-\tilde{\omega}(\varepsilon)\right]^2
\end{split}
\end{equation}
is conserved along solutions of (\ref{eqSF1}). In particular, $\hat{H}=0$ holds along solutions that satisfy
one of the asymptotic behaviours in (\ref{eqSFbdryu1}) at minus or plus infinity. In other words,
\begin{equation}\label{eqH=01}
\hat{H}=0\ \ \textrm{on}\ \ W^u(0,0,\pi-\bar{\varphi},0)\cup W^s(0,0,\bar{\varphi},0).
\end{equation}
\subsection{The slow (critical) manifold $\mathcal{M}_0$ and the reduced system}\label{secSlowRed1}
Formally setting $\varepsilon=0$ in (\ref{eqSF1}) yields the  \emph{slow limit system}:
\begin{equation}\label{eqSFlimit1}\left\{\begin{array}{rcl}
    0 & = & w_2, \\
      &   &   \\
    0 & = & - \frac{1}{4} \varphi_2^2

    +2w_1  - \frac{1}{2}\sin^2\varphi_1+ c_0\sin \varphi_1, \\
      &   &   \\
    \varphi_1' & = & \varphi_2, \\
      &   &   \\
    \varphi_2' & = &  \frac{1}{2} \sin (2\varphi_1)-2c_0\cos \varphi_1.
  \end{array}\right.
\end{equation}
By solving the  first two equations for $w_1,\ w_2$ we can determine the \emph{slow manifold}:
\begin{equation}\label{eqCritiqMan}
\mathcal{M}_0=\left\{w_1=\frac{1}{8}{\varphi_2^2 +   \frac{1}{4}\sin^2\varphi_1-\frac{c_0}{2}\sin\varphi_1},\ w_2=0,\ (\varphi_1,\varphi_2)\in \mathbb{R}^2 \right\}.
\end{equation}

The last two equations of (\ref{eqSFlimit}) compose the   \emph{reduced system} which defines a flow on the critical manifold $\mathcal{M}_0$.
Coupled with the $\varepsilon=0$ limit of the asymptotic behaviour (\ref{eqSFbdryu1}), this gives rise to  the   \emph{reduced heteroclinic connection problem}:
\begin{equation}\label{eqRB}
\left\{\begin{array}{l}\varphi_1'=\varphi_2,\\
         \varphi_2'  = \cos\varphi_1(\sin \varphi_1-2c_0); \\
           \\
          (\varphi_1,\varphi_2) \to ( \pi-\bar{\varphi}_0,0)\ \textrm{as}\ y\to -\infty,\ \ (\varphi_1,\varphi_2) \to (\bar{\varphi}_0,0)\ \textrm{as}\ y\to +\infty,
       \end{array}\right.
\end{equation}
where
\begin{equation}\label{eqPhi0}
  \sin \bar{\varphi}_0=2c_0,\ \ \bar{\varphi}_0\in (0,\frac{\pi}{2}),
\end{equation}
because of (\ref{eqEquilNew}).

\subsection{The singular heteroclinic connection}\label{subsecUo1}
The  reduced system is a conservative hamiltonian  system. More precisely, the hamiltonian
\begin{equation}\label{eqHamilRedu}
  H_{red}=\frac{1}{2}\varphi_2^2-\frac{1}{2}(\sin \varphi_1-2c_0)^2
\end{equation}
is constant along its solutions. Based on this, we can show the following.\begin{pro}\label{proReduced2}
There exists a unique solution $(\varphi_{1,0},\varphi_{2,0})$ of (\ref{eqRB}) such that \begin{equation}\label{eqPhi101}\varphi_{1,0}(0)=\frac{\pi}{2}.\end{equation}
Moreover, we have \begin{equation}\label{eqPhi102}
                     \varphi_{2,0}=\varphi_{1,0}'<0,
                   \end{equation} and \begin{equation}\label{eqPhi103}
                                                           \varphi_{1,0}(y)+\varphi_{1,0}(-y)\equiv \pi.
                                                         \end{equation}\end{pro}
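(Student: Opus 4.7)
The plan is to exploit the conservation of $H_{red}$ to collapse the second order system (\ref{eqRB}) onto the first order ODE (\ref{eqLimitPb2}), and then to obtain existence, monotonicity and the reflection symmetry from the uniqueness of the resulting autonomous scalar initial value problem.

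First I would observe that along any heteroclinic orbit of (\ref{eqRB}) the boundary conditions force $\varphi_2\to 0$ and $\sin\varphi_1\to 2c_0$ at both ends, so the conservation of (\ref{eqHamilRedu}) gives $H_{red}\equiv 0$. Solving $H_{red}=0$ for $\varphi_2$ yields $\varphi_2=\pm(\sin\varphi_1-2c_0)$. Because $\sin\bar\varphi_0=2c_0$ with $\bar\varphi_0\in(0,\pi/2)$, on the interval $\varphi_1\in(\bar\varphi_0,\pi-\bar\varphi_0)$ we have $\sin\varphi_1>2c_0$, so the requirement $\varphi_1'<0$ (which is forced since $\varphi_1$ must decrease from $\pi-\bar\varphi_0$ to $\bar\varphi_0$) selects the minus sign, giving $\varphi_1'=2c_0-\sin\varphi_1$, which is exactly (\ref{eqLimitPb2}).

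Next I would analyze this separable autonomous scalar ODE. Its only zeros in $[0,\pi]$ are $\bar\varphi_0$ and $\pi-\bar\varphi_0$, and its right hand side is strictly negative on the open interval between them. The Picard--Lindel\"of theorem yields a unique solution $\varphi_{1,0}$ with $\varphi_{1,0}(0)=\pi/2$, and since the right hand side is smooth and bounded, this solution extends to all of $\mathbb{R}$. Monotonicity and the trapping interval $(\bar\varphi_0,\pi-\bar\varphi_0)$ then imply convergence to the two equilibria at $\pm\infty$, producing the required heteroclinic; defining $\varphi_{2,0}=\varphi_{1,0}'$ recovers a full solution of (\ref{eqRB}) and gives (\ref{eqPhi102}). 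Uniqueness of the solution of (\ref{eqRB}) with $\varphi_{1,0}(0)=\pi/2$ follows because every such solution lies on $H_{red}=0$ and on the branch $\varphi_2=2c_0-\sin\varphi_1$ (selected by the sign of $\varphi_1'$), and then uniqueness for (\ref{eqLimitPb2}) applies.

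Finally, for the symmetry (\ref{eqPhi103}), I would set $\tilde\varphi(y):=\pi-\varphi_{1,0}(-y)$ and check using $\sin(\pi-\theta)=\sin\theta$ that
\[
\tilde\varphi'(y)=\varphi_{1,0}'(-y)=2c_0-\sin\varphi_{1,0}(-y)=2c_0-\sin\tilde\varphi(y),
\]
while $\tilde\varphi(0)=\pi-\pi/2=\pi/2=\varphi_{1,0}(0)$. Uniqueness of the initial value problem for (\ref{eqLimitPb2}) then forces $\tilde\varphi\equiv\varphi_{1,0}$, which is precisely (\ref{eqPhi103}). There is no serious obstacle in this argument; it is a routine phase plane analysis, and the only conceptual step is the reduction from the conservative second order system to the first order equation (\ref{eqLimitPb2}), whose autonomous character controls existence, monotonicity, uniqueness and the reflection symmetry all at once.
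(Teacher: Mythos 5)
Your proposal is correct and follows essentially the same route as the paper: the authors likewise use $H_{red}\equiv 0$ along the heteroclinic to reduce (\ref{eqRB}) to the scalar equation $\varphi'=2c_0-\sin\varphi$ with $\varphi(0)=\pi/2$, and then carry over the argument of Proposition \ref{proU0} (monotonicity between consecutive equilibria, IVP uniqueness, and the reflection $\tilde\varphi(y)=\pi-\varphi(-y)$ for the symmetry (\ref{eqPhi103})). Your write-up just spells out these steps in slightly more detail, including the sign selection $\varphi_2=2c_0-\sin\varphi_1$ dictated by the boundary conditions.
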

                                                         \begin{proof}Since $H_{red}$ above is equal to 0 along solutions that satisfy the asymptotic behaviour in (\ref{eqRB}),
                                                         the desired solution satisfies
                                                         \[
                                                         \varphi'=2c_0-\sin \varphi,\  \ \varphi(0)=\frac{\pi}{2}.
                                                         \]Then, the proof of Proposition \ref{proU0} carries over straightforwardly.
                                                         \end{proof}

                                                         We note that $(\pi-\bar{\varphi}_0,0)$ and $(\bar{\varphi}_0,0)$ are saddle equilibria for (\ref{eqRB}) with the same eigenvalues $\pm \sqrt{1-4c_0^2}$.
The trajectory $(\varphi_{1,0},\varphi_{2,0})$ lies in the intersection of the unstable manifold $W^u(\pi-\bar{\varphi}_0,0)$ of $(\pi-\bar{\varphi}_0,0)$ and the stable manifold $W^s(\bar{\varphi}_0,0)$ of $(\bar{\varphi}_0,0)$. We will only be concerned with these parts of the aforementioned invariant manifolds.
The lifting of $(\varphi_{1,0},\varphi_{2,0})$ on $\mathcal{M}_0$ furnishes a \emph{singular heteroclinic connection}.

\subsection{Normal hyperbolicity of the slow manifold}As in Subsection \ref{subsecNH}, to check that the slow manifold $\mathcal{M}_0$ is \emph{normally hyperbolic}
we have to examine the linearization of the righthand side of the first two equations in (\ref{eqSFlimit}) with respect to $(w_1,w_2)$, at any point $(w_1,w_2,\varphi_1,\varphi_2)$ on $\mathcal{M}_0$. The aforementioned linearization is
\[
\left(\begin{array}{cc}
        0 & 1 \\
        2 & 0
      \end{array}
 \right),
\]whose eigenvalues   $ \pm\sqrt{2}$ are not on the imaginary axis. Hence, the slow manifold $\mathcal{M}_0$ is indeed normally hyperbolic.

\subsection{Local persistence of $\mathcal{M}_0$: The invariant manifold $\mathcal{M}_\varepsilon$}\label{secMeps1} Let $\mathcal{K}$ be a compact, simply connected domain in the $(\varphi_1,\varphi_2)$ plane which contains the heteroclinic orbit $(\varphi_{1,0},\varphi_{2,0})$, and whose boundary is a
$C^\infty$ curve. As in Subsection \ref{secMeps}, by Fenichel's first theorem, we deduce that
the restriction of $\mathcal{M}_0$ over $\mathcal{K}$ perturbs smoothly for small $\varepsilon \geq 0$ to a locally invariant, normally hyperbolic manifold $\mathcal{M}_\varepsilon$ for (\ref{eqSF1}).
More precisely,  given an integer $m\geq 1$,  there is an $\varepsilon_0>0$ and functions $h_i(\varphi_1,\varphi_2,\varepsilon)\in C^m\left(\mathcal{K}\times [0,\varepsilon_0) \right)$,
$i=1,2$,  such that the manifold $\mathcal{M}_\varepsilon$ described by
\begin{equation}\label{eqh1}
w_1=\frac{1}{8}{\varphi_2^2 +   \frac{1}{4}\sin^2\varphi_1-\frac{c_0}{2}\sin\varphi_1}+\varepsilon h_1(\varphi_1,\varphi_2,\varepsilon),
\ \ w_2=\varepsilon h_2(\varphi_1,\varphi_2,\varepsilon),\ \ (\varphi_1,\varphi_2)\in\mathcal{K},\end{equation}
is a normally hyperbolic, locally invariant manifold for (\ref{eqSF1}) if $\varepsilon\in (0,\varepsilon_0)$.

 By the normal hyperbolicity of $M_\varepsilon$, and by possibly decreasing the value of $\varepsilon_0>0$, we deduce that the equilibria $(0,0,\pi-\bar{\varphi},0)$ and $(0,0,\bar{\varphi},0)$ of (\ref{eqSF1})  lie on $\mathcal{M}_\varepsilon$, i.e.,
\begin{equation}\label{eqEquiv01}\begin{array}{c}
                                  h_1\left(\pi-\bar{\varphi},0,\varepsilon\right)= h_1\left(\bar{\varphi},0,\varepsilon\right)=-(c_0+\tilde{\omega}(\varepsilon))\frac{\tilde{\omega}(\varepsilon)}{\varepsilon}, \\ \\
                                  h_2\left(\pi-\bar{\varphi},0,\varepsilon\right)= h_2\left(\bar{\varphi},0,\varepsilon\right)=0,
                                \end{array}
 \end{equation}for $\varepsilon \in (0,\varepsilon_0)$. 

\subsection{Proof of Theorem \ref{thm2}}
The above leads us to the main result of this section, from which Theorem \ref{thm2} follows :
\begin{thm}\label{thm11}
For each $\varepsilon>0$ sufficiently small, there is a heteroclinic solution\\ $(w_{1,\varepsilon}$,$w_{2,\varepsilon}$,$\varphi_{1,\varepsilon}$,$\varphi_{2,\varepsilon})$ of (\ref{eqSF1}) satisfying (\ref{eqSFbdryu1}) which lies on $\mathcal{M}_\varepsilon$ such that
\begin{equation}\label{eqEstims11}
     \varphi_{i,\varepsilon}=\varphi_{i,0}+O(\varepsilon),\ \  i=1,2,
   \end{equation}and
   \begin{equation}\label{eqproduct1}
     |\varphi_{1,\varepsilon}-\pi+\bar{\varphi}|\cdot|\varphi_{1,\varepsilon}-\bar{\varphi}|+|\varphi_{2,\varepsilon}|=O(1)e^{-\left( \sqrt{1-4c_0^2}+O(\varepsilon)\right)|x|},
   \end{equation}
uniformly in $\mathbb{R}$, as $\varepsilon\to 0$, where $(\varphi_{1,0},\varphi_{2,0})$ is as in Proposition \ref{proReduced2}. Furthermore,
\begin{equation}\label{eqz>01}
  \varphi_{2,\varepsilon}<0.
\end{equation}
More precisely, the following estimates hold:
\begin{equation}\label{eqexp221}
   \begin{array}{c}
     w_{1,\varepsilon}=\frac{1}{8}{\varphi_{2,\varepsilon}^2 +   \frac{1}{4}\sin^2\varphi_{1,\varepsilon}-\frac{c_0}{2}\sin\varphi_{1,\varepsilon}}+O(\varepsilon)e^{-\left(\sqrt{1-4c_0^2}+O(\varepsilon)\right)|x|}, \\ \\

     w_{2,\varepsilon}=O(\varepsilon)e^{-\left(\sqrt{1-4c_0^2}+O(\varepsilon)\right)|x|},
   \end{array}
\end{equation}
uniformly in $\mathbb{R}$, as $\varepsilon\to 0$.
\end{thm}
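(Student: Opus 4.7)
The proof of Theorem \ref{thm11} will follow the same strategy as that of Theorem \ref{thm1}, with the slow-fast system (\ref{eqSF1}), the invariant manifold (\ref{eqh1}), and the Hamiltonian (\ref{eqRHS}) playing the roles of (\ref{eqSF}), (\ref{eqh}) and (\ref{eqHamilpq}) respectively. I would begin by substituting the relations (\ref{eqh1}) into the last two equations of (\ref{eqSF1}). Using (\ref{eqEquiv01}) and the smoothness of $h_1,h_2,\tilde{\omega}$, this yields a planar system in $(\varphi_1,\varphi_2)$ which is a $C^m$, $O(\varepsilon)$-perturbation of the reduced system (\ref{eqRB}); I will refer to it as the $\varepsilon$-reduced system. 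Its equilibria are exactly $(\pi-\bar\varphi,0)$ and $(\bar\varphi,0)$, which are saddles with eigenvalues $\pm\sqrt{1-4c_0^2}+O(\varepsilon)$ (cf.\ (\ref{eqEVs1})), so their one-dimensional unstable manifold $W^u(\pi-\bar\varphi,0)$ and stable manifold $W^s(\bar\varphi,0)$ exist and depend smoothly on $\varepsilon\geq 0$ small, perturbing $W^u(\pi-\bar\varphi_0,0)$ and $W^s(\bar\varphi_0,0)$ respectively.

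The key step is to show that these two one-dimensional manifolds actually coincide for small $\varepsilon>0$. I would use as a transversal the line $\ell=\{\varphi_1=\pi/2,\ \varphi_2\in\mathbb{R}\}$, which by (\ref{eqPhi102}) is crossed transversely by the singular connection $(\varphi_{1,0},\varphi_{2,0})$ at $(\pi/2,\varphi_{2,0}(0))$. Smooth dependence of the invariant manifolds on $\varepsilon$ gives intersection points $(\pi/2,\varphi_{2,\varepsilon}^{\pm})$ within $O(\varepsilon)$ of $(\pi/2,\varphi_{2,0}(0))$, whose liftings $(w_{1,\varepsilon}^{\pm},w_{2,\varepsilon}^{\pm},\pi/2,\varphi_{2,\varepsilon}^{\pm})$ on $\mathcal{M}_\varepsilon$ via (\ref{eqh1}) lie in $W^u(0,0,\pi-\bar\varphi,0)\cap\mathcal{M}_\varepsilon$ and $W^s(0,0,\bar\varphi,0)\cap\mathcal{M}_\varepsilon$, respectively. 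The goal is to prove $\varphi_{2,\varepsilon}^{-}=\varphi_{2,\varepsilon}^{+}$, which then forces the two one-dimensional manifolds to coincide and provides the desired heteroclinic orbit after lifting.

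The main obstacle, and the reason this is nontrivial, is that in the ambient $\mathbb{R}^4$ the unstable and stable manifolds are both two-dimensional and their intersection cannot be transverse on the three-dimensional level set $\{\hat H=0\}$. I overcome this exactly as in the proof of Theorem \ref{thm1}, by exploiting the conservation law (\ref{eqH=01}). Specifically, I define the $C^2$ map $F:\mathbb{R}^2\times\mathcal{K}\times[0,\varepsilon_0)\to\mathbb{R}^3$ by
\[
F(w_1,w_2,\varphi_1,\varphi_2,\varepsilon)=\begin{pmatrix}
w_1-\tfrac{1}{8}\varphi_2^2-\tfrac{1}{4}\sin^2\varphi_1+\tfrac{c_0}{2}\sin\varphi_1-\varepsilon h_1(\varphi_1,\varphi_2,\varepsilon)\\
w_2-\varepsilon h_2(\varphi_1,\varphi_2,\varepsilon)\\
\hat H(w_1,w_2,\varphi_1,\varphi_2,\varepsilon)
\end{pmatrix}
\]
and apply the implicit function theorem in the variables $(w_1,w_2,\varphi_2)$ around the base point $(w_{1,0},0,\pi/2,\varphi_{2,0}(0),0)$. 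A direct computation of $\partial_{w_1,w_2,\varphi_2}F$ at $\varepsilon=0$ gives a lower-triangular matrix with diagonal $(1,1,\tfrac{1}{4}\varphi_{2,0}(0))$ (up to the harmless off-diagonal entry coming from the first row), which is invertible thanks to (\ref{eqPhi102}). Thus the local solution of $F=0$ near the base point is unique, forcing $\varphi_{2,\varepsilon}^{-}=\varphi_{2,\varepsilon}^{+}$ and simultaneously giving the estimates (\ref{eqEstims11}) via the smooth dependence.

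It remains to derive the exponential decay in (\ref{eqproduct1}) and (\ref{eqexp221}), and the sign in (\ref{eqz>01}). The decay on $(\varphi_{1,\varepsilon},\varphi_{2,\varepsilon})$ follows from the standard stable/unstable manifold linearization at the two saddles of the $\varepsilon$-reduced system, whose eigenvalues are $\pm\sqrt{1-4c_0^2}+O(\varepsilon)$. The decay of $w_{1,\varepsilon}-\bigl(\tfrac{1}{8}\varphi_{2,\varepsilon}^2+\tfrac{1}{4}\sin^2\varphi_{1,\varepsilon}-\tfrac{c_0}{2}\sin\varphi_{1,\varepsilon}\bigr)$ and $w_{2,\varepsilon}$ then follows by plugging these estimates into (\ref{eqh1}) and using (\ref{eqEquiv01}) to subtract out the equilibrium values. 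Finally, the strict inequality (\ref{eqz>01}) follows from (\ref{eqPhi102}) together with the transverse crossing of $\varphi_2=0$ by $W^u(\pi-\bar\varphi_0,0)$ and $W^s(\bar\varphi_0,0)$ at the two equilibria of the limit problem. A local-uniqueness statement entirely analogous to Proposition \ref{proUniq} (whose proof carries over verbatim) then permits the symmetrization argument of Subsection \ref{subsecProof1} to upgrade Theorem \ref{thm11} to the full Theorem \ref{thm2}.
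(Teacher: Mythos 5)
Your proposal is correct and follows essentially the same route as the paper, which itself proves Theorem \ref{thm11} by repeating the argument of Theorem \ref{thm1}: restrict to $\mathcal{M}_\varepsilon$ via (\ref{eqh1}), intersect the perturbed one-dimensional invariant manifolds with the transversal $\{\varphi_1=\pi/2\}$, and force the two intersection points to coincide by combining the conservation law (\ref{eqH=01}) with the implicit function theorem, whose applicability rests on $\varphi_{2,0}(0)\neq 0$ from (\ref{eqPhi102}). The only (harmless) slips are that the relevant Jacobian entry is $\tfrac{1}{2}\varphi_{2,0}(0)$ rather than $\tfrac{1}{4}\varphi_{2,0}(0)$ (using the rescaled Hamiltonian $\tfrac{2}{\varepsilon^2}\hat H$, as one must to avoid degeneracy at $\varepsilon=0$), and the matrix is upper- rather than lower-triangular; neither affects invertibility.
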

\begin{proof}
  The proof proceeds along the lines of that of Theorem \ref{thm1}, so we will just provide a sketch.

  Let $(w_{1,\varepsilon}^-,w_{2,\varepsilon}^-,\frac{\pi}{2},\varphi_{2,\varepsilon}^-)\in W^u(0,0,\pi-\bar{\varphi},0)$ and
  $(w_{1,\varepsilon}^+,w_{2,\varepsilon}^+,\frac{\pi}{2},\varphi_{2,\varepsilon}^+)\in W^s(0,0, \bar{\varphi},0)$ be the two points that we wish to show that coincide, provided that $\varepsilon>0$ is sufficiently small. In the limit $\varepsilon\to 0$ these collapse to some point $\left(w_1^0,w_2^0,\frac{\pi}{2},\varphi_{2,0}(0)\right)$ on $\mathcal{M}_0$ with $\left(\frac{\pi}{2},\varphi_{2,0}(0) \right)$ on $W^u(\pi-\bar{\varphi}_0,0)\cap W^s( \bar{\varphi}_0,0)$, as defined in Subsection \ref{subsecUo1} (also recall (\ref{eqPhi101})).

The corresponding mapping to that in (\ref{eqMapping}) is now
\[
G\left(\begin{array}{c}
         w_1 \\
         w_2 \\
         \varphi_1 \\
         \varphi_2 \\
         \varepsilon
       \end{array}
 \right)=\left(\begin{array}{c}
                 w_1-\frac{1}{8}{\varphi_2^2 -   \frac{1}{4}\sin^2\varphi_1+c_0\sin\varphi_1}-\varepsilon h_1(\varphi_1,\varphi_2,\varepsilon) \\
                 w_2-\varepsilon h_2(\varphi_1,\varphi_2,\varepsilon) \\
                 \hat{H}(w_1,w_2,\varphi_1,\varphi_2,\varepsilon) ,
               \end{array}
  \right)
\]
where $h_i$ and $\hat{H}$ were defined in  (\ref{eqEquiv01}) and (\ref{eqRHS}) respectively. By virtue of (\ref{eqH=01}),
\[
G\left(w_{1,\varepsilon}^\pm,w_{2,\varepsilon}^\pm,\frac{\pi}{2},\varphi_{2,\varepsilon}^\pm,\varepsilon\right)=0\ \ \textrm{for small}\ \varepsilon>0.
\]
In fact, the above relation continues to hold for $\varepsilon$ as
\[
G\left(w_1^0,w_2 ^0,\frac{\pi}{2},\varphi_{2,0}(0),0\right)=0.
\]
As in Theorem \ref{thm1}, thanks to (\ref{eqPhi102}) at $y=0$, we can apply the implicit function theorem and conclude.
  \end{proof}
  \subsection*{Acknowledgements}
The second author would like to acknowledge support from the program PSL-Maths and thank the CAMS for its hospitality.
\bibliographystyle{plain}
\bibliography{biblioaacs}

\begin{thebibliography}{10}

\bibitem{am}
A.~Aftalion and P.~Mason.
\newblock Rabi-coupled two-component {B}ose-{E}instein condensates:
  Classification of the ground states, defects, and energy estimates.
\newblock {\em Phys. Rev. A}, 94(2):023616, 2016.

\bibitem{AL}
A.~Aftalion and J.~Royo-Letelier.
\newblock A minimal interface problem arising from a two component
  {B}ose--{E}instein condensate via ${\Gamma}$-convergence.
\newblock {\em Calc. of Var. and PDE}, 52(1-2):165--197, 2015.

\bibitem{aftalionSourdis}
A.~Aftalion and C.~Sourdis.
\newblock Interface layer of a two-component {B}ose-{E}instein condensate.
\newblock {\em Com. in Contemp. Math.}, 19(05):1650052, 2017.

\bibitem{alamaARMA15}
S.~Alama, L.~Bronsard, A.~Contreras, and D.E. Pelinovsky.
\newblock Domain walls in the coupled {G}ross--{P}itaevskii equations, 2015.

\bibitem{barankov}
R.A. Barankov.
\newblock Boundary of two mixed {B}ose-{E}instein condensates.
\newblock {\em Phys. Rev. A}, 66(1):013612, 2002.

\bibitem{berestycki-wei2012}
H.~Berestycki, T.-C. Lin, J.~Wei, and C.~Zhao.
\newblock On phase-separation models: asymptotics and qualitative properties.
\newblock {\em Archive for Rational Mechanics and Analysis}, 208(1):163--200,
  2013.

\bibitem{berestycki2}
H.~Berestycki, S.~Terracini, K.~Wang, and J.~Wei.
\newblock On entire solutions of an elliptic system modeling phase separations.
\newblock {\em Advances in Mathematics}, 243:102--126, 2013.

\bibitem{nitta}
M.~Cipriani and M.~Nitta.
\newblock Crossover between integer and fractional vortex lattices in
  coherently coupled two-component {B}ose-{E}instein condensates.
\newblock {\em Phys. Rev. Let.}, 111(17):170401, 2013.

\bibitem{dror}
N.~Dror, B.~A Malomed, and J.~Zeng.
\newblock Domain walls and vortices in linearly coupled systems.
\newblock {\em Phys. Rev. E}, 84(4):046602, 2011.

\bibitem{farina2017monotonicity}
A.~Farina, B.~Sciunzi, and N.~Soave.
\newblock Monotonicity and rigidity of solutions to some elliptic systems with
  uniform limits.
\newblock {\em arXiv preprint arXiv:1704.06430}, 2017.

\bibitem{fenichelJDE}
N.~Fenichel.
\newblock Geometric singular perturbation theory for ordinary differential
  equations.
\newblock {\em J. Differential Equations}, 31:53--98, 1979.

\bibitem{goldman2015phase}
M.~Goldman and B.~Merlet.
\newblock Phase segregation for binary mixtures of {B}ose-{E}instein
  condensates.
\newblock {\em SIAM J. on Math. Anal.}, 49(3):1947--1981, 2017.

\bibitem{hal98}
D.S. Hall, M.R. Matthews, J.R. Ensher, C.E. Wieman, and E.A. Cornell.
\newblock Dynamics of component separation in a binary mixture of
  {B}ose-{E}instein condensates.
\newblock {\em Phys. Rev. Let.}, 81:1539, 1998.

\bibitem{jones1995geometric}
C.~Jones.
\newblock Geometric singular perturbation theory.
\newblock In {\em Dynamical systems, {Lecture Notes in Mathematics 1609}},
  pages 44--118. Springer, 1995.

\bibitem{kuhen}
C.~Kuehn.
\newblock {\em {M}ultiple {T}ime {S}cale {D}ynamics}, volume 191 of {\em
  {A}pplied {M}athematical {S}ciences}.
\newblock Springer, 2015.

\bibitem{sp}
S.~Lellouch, T.-L. Dao, T.~Koffel, and L.~Sanchez-Palencia.
\newblock Two-component {B}ose gases with one-body and two-body couplings.
\newblock {\em Phys. Rev. A}, 88(6):063646, 2013.

\bibitem{matexp}
M.R. Matthews, B.P. Anderson, P.C. Haljan, D.S. Hall, M.J. Holland, J.E.
  Williams, C.E. Wieman, and E.A. Cornell.
\newblock Watching a superfluid untwist itself: Recurrence of {R}abi
  oscillations in a {B}ose-{E}instein condensate.
\newblock {\em Phys. Rev. Lett.}, 83(17):3358, 1999.

\bibitem{qustring}
C.~Qu, M.~Tylutki, S.~Stringari, and L.P. Pitaevskii.
\newblock Magnetic solitons in rabi-coupled {B}ose-{E}instein condensates.
\newblock {\em Physical Review A}, 95(3):033614, 2017.

\bibitem{sourdisweak}
C.~Sourdis.
\newblock On the weak separation limit of a two-component {B}ose-{E}instein
  condensate.
\newblock {\em Electron. J. Differential Equations}, 2018(40):1--12, 2018.

\bibitem{usui}
A.~Usui and H.~Takeuchi.
\newblock Rabi-coupled countersuperflow in binary {B}ose-{E}instein
  condensates.
\newblock {\em Phys. Rev. A}, 91(6):063635, 2015.

\end{thebibliography}
\end{document}